\renewcommand{\pod}[1]{\mathchoice
  {\allowbreak \if@display \mkern 8mu\else \mkern 8mu\fi (#1)}
  {\allowbreak \if@display \mkern 8mu\else \mkern 8mu\fi (#1)}
  {\mkern4mu(#1)}
  {\mkern4mu(#1)}
}
\newtheorem{lem}{Lemma}
\newtheorem{prop}{Proposition}
\newtheorem{thm}{Theorem}
\newtheorem*{BZ*}{Theorem (Baker-Zhao)}
\theoremstyle{remark}
\newtheorem*{rem*}{Remark}
\numberwithin{equation}{section}
\title{A note on small gaps between primes in arithmetic progressions}
\author{Deniz A. Kaptan}
\address{Central European University Department of Mathematics and its Applications
1051 Budapest Nador u. 9 Hungary}
\email{denizalikaptan@gmail.com}
\keywords{Distribution of primes, Primes in progressions}
\subjclass[2010]{11N05, 11N13}
\begin{document}
\begin{abstract}
We implement the Maynard-Tao method of detecting primes in tuples to investigate small gaps between primes in arithmetic progressions, with bounds that are uniform over a range of moduli.
\end{abstract}

\maketitle

\section{Introduction}
A long standing problem concerning the distribution of prime numbers is the \emph{prime $k$-tuples conjecture}. We call a set $\mathcal{H}=\{h_1,\ldots,h_k\}$ an \emph{admissible $k$-tuple} if the $h_i$ don't cover all residue classes modulo $p$ for any prime $p$. The prime $k$-tuples conjecture then states that there are infinitely many integers $n$ such that all of the numbers $n+h_i$, $i=1,\ldots,k$ are simultaneously prime. Recently, there have been breakthrough developments towards proving this conjecture. First Zhang \cite{Z2014}, refining a method of Goldston, Pintz and Y\i ld\i r\i m \cite{GPY2009}, proved that for $k$ large enough, the sets $n+\mathcal{H}$ contain two primes infinitely often, thus settling the bounded gaps conjecture. Then Maynard \cite{Ma2015} and Tao (unpublished) independently devised another modification of the Goldston-Pintz-Y\i ld\i r\i m method which could detect $m$ primes in $k$-tuples for any $m$, provided $k$ is large enough.

In this paper we present an implementation of the Maynard-Tao method to yield a corresponding result concerning primes in an arithmetic progression, with a bound that is uniform in the modulus of the progression. The proof goes along the same lines, after tweaking the set-up to pick out only the primes in the arithmetic progression under consideration. The key ingredient will be a Bombieri-Vinogradov type theorem that is tailored to the case at hand, which will be proved in section \ref{BVsect}.

The author would like to thank Roger Baker and Liangyi Zhao for calling his attention to a result of theirs~\cite{BZ2014} which precedes the present work and is of a similar nature. The similarities and differences between the two works will be briefly discussed at the end of the paper.

\section{Notation and setup}\label{notation}
Throughout, the letters $c$ and $C$ will denote constants which need not be the same at every instance. When we need to track constants, we employ subscripts or superscripts.

The method requires that we restrict ourselves to arithmetic progressions in which primes are reasonably well-distributed, i.e.\! progressions to moduli whose associated Dirichlet $L$-functions don't vanish too close to $s=1$.

For the imaginary part $\gamma$ of a zero of an $L$-function, we shall denote $\lvert\gamma\rvert+1$ by $\widetilde{\gamma}$ for the sake of brevity.  We first recall some basic facts concerning zero-free regions of $L$-functions \cite{D2000}*{\S14}. There is a constant $c_0$ (the bounds cited below are known in fact for different constants, but we take $c_0$ to be the minimum of those to simplify notation) such that an $L$-function $L(s,\chi)$ to the modulus $q$ has no zero $\beta+i\gamma$ in the region
\begin{equation}\label{classicalzerofree}
\beta\geq1-\frac{c_0}{\log q\widetilde{\gamma}},
\end{equation}
except possibly a single real zero, which can exist for at most one real character $\chi\pmod{q}$. We call a modulus to which there's such a \emph{primitive} character an \emph{exceptional modulus}, and the corresponding zero an \emph{exceptional zero}. Exceptional moduli are of the form $q=2^\nu p_1\ldots p_m$, where $\nu\leq3$ and $p_1<p_2<\ldots <p_m$ are distinct odd primes, whence, by the Prime Number Theorem, we have $p_m\gg\sum_{p\leq p_m}\log p\gg \log q$.
On the other hand we have, for the real zeros, the unconditional bound
\begin{equation}\label{uncond}
\beta<1-\frac{c_0}{q^{1/2}(\log q)^2}.
\end{equation}
Also, if $\chi_1$ and $\chi_2$ are distinct real primitive characters to moduli $q_1$ and $q_2$ respectively and the corresponding $L$-functions have real zeros $\beta_1$ and $\beta_2$, then the Landau-Page theorem states that these zeros must satisfy
\begin{equation}\label{minzero}
\min(\beta_1,\beta_2)<1-\frac{c_0}{\log q_1q_2}.
\end{equation}

We shall have to confine ourselves to $L$-functions which don't have a zero in the region
\begin{equation}\label{zfreereq}
\beta\geq1-\frac{c^*\log\log X}{\log X},\quad \widetilde{\gamma}\leq\exp\left(c^\sharp\sqrt{\log X}\right)
\end{equation}
for a parameter $X$ and given constants $c^*$ and $c^\sharp$. This is a consequence of \eqref{uncond} when $q\ll\left(\frac{\log X}{(\log\log X)^2}\right)^2$. We suppose that $X$ is large enough in terms of $c^\sharp$ and $c^*$ such that
\begin{equation}\label{Xlarge}
c^\sharp\leq\frac{c_0}{4c^*}\frac{\sqrt{\log X}}{\log\log X}
\end{equation}
holds, and argue that there's at most one modulus $\leq\exp\left(2c^\sharp\sqrt{\log X}\right)$ to which there's a primitive character whose $L$-function vanishes in the region \eqref{zfreereq}. By \eqref{classicalzerofree}, no non-exceptional zeros exist in the region stated, so we only need to consider real zeros. Suppose there are two such moduli $q_1$ and $q_2$, with corresponding real zeros $\beta_1$ and $\beta_2$. Then using \eqref{minzero} we have,
\begin{equation}
\begin{split}
1-\frac{c^*\log\log X}{\log X}&<1-\frac{c_0}{\log q_1q_2}\\
&\leq1-\frac{c_0}{4c^\sharp\sqrt{\log X}},
\end{split}
\end{equation}
which is impossible by \eqref{Xlarge}. We denote this possibly existing unique modulus by $q_0$ and the greatest prime dividing $q_0$ by $p_0$, or set $p_0=1$ in case $q_0$ does not exist. We note that $q_0\gg\left(\frac{\log X}{(\log\log X)^2}\right)^2$, whence $p_0\gg\log\log X$.

Our main parameter $X$ is large enough and $f(X)$ is a given increasing function of $X$ with $f(X)\ll X^{\frac{5}{12}-\frac{5}{6}\vartheta}$ for some positive number $\vartheta<1/2$. The modulus $M$ of the arithmetic progression does not exceed $f(X)$ and is not a multiple of any number in a set $\mathcal{Z}$ of exceptions whose size $Z_f$ satisfies
\begin{equation}
Z_f=\begin{cases}
0,\quad&\text{if $f(X)\ll(\log X)^C$,}\\
1,\quad&\text{if $f(X)\ll\exp\left(c\sqrt{\log X}\right)$,}\\
O\left((\log\log X)^C\right),\quad&\text{otherwise}.
\end{cases}
\end{equation}

We denote characters modulo $q$, $M$, and $qM$ by $\psi$, $\xi$, and $\chi$ respectively. A summation $\sum_\chi^*$ over characters with an asterisk in the superscript denotes that the summation is over primitive characters only.

Put $x=X/M$ and let $W=\prod_{p\leq D_0}p$ be the product of primes not exceeding $D_0=\log\log\log X$, and in turn put $W^\prime=W/(W,P_fM)$ and $V=W^\prime M$, where
\begin{equation}
P_f=\begin{cases}
1,\quad&\text{if $f(X)\ll(\log X)^C$,}\\
p_0,\quad&\text{otherwise}.
\end{cases}
\end{equation}
Also put $R=N^{\frac{1}{2}\vartheta-\delta}$ for some small positive $\delta$. Let $\mathcal{H}=\{h_1,\ldots,h_k\}$ be an admissible $k$-tuple with $\operatorname{diam}(\mathcal{H})<D_0M$ such that $h_i\equiv a\pmod{M}$, $i=1,\ldots,k$ for a given residue class $a\pmod{M}$ coprime to $M$. The weights $\lambda_{d_1,\ldots,d_k}$ are supported on $(\prod_id_i,VP_f)=1$, $\prod_id_i<R$, and $\mu(\prod_id_i)^2=1$ (the last condition implies, of course, that $(d_i,d_j)=1$). We also choose $\nu_0$ such that $(M\nu_0+h_i,W^\prime)=1$ for $i=1,\ldots,k$ (this is possible because $\mathcal{H}$ is admissible).

With these, we will consider the sum
\begin{equation}
S^{(\rho)}=\sum_{\substack{x\leq n<2x\\n\equiv\nu_0\pmod{W^\prime}}}\biggl(\sum_{i=1}^k\chi_\mathbb{P}(nM+h_i)-\rho\biggr)\biggl(\sum_{d_i\mid nM+h_i}\lambda_{d_1,\ldots,d_k}\biggr)^2,
\end{equation}
where $\chi_\mathbb{P}$ is the characteristic function of primes. Clearly, the positivity of $S^{(\rho)}$ implies that for at least one $n\in[x,2x)$, the inner sum is positive, and this establishes the existence of at least $\lfloor\rho+1\rfloor$ primes among the numbers $nM+h_i$, $i=1,\ldots,k$, but $nM$ lies in $[X,2X)$ and each $nM+h_i$ is congruent to $a\pmod{M}$ by the condition on $\mathcal{H}$.

\section{Results}
Our main theorem is the following.
\begin{thm}\label{main}
Let $k$ be a given integer, $\vartheta<1/2$, and $f(X)\ll X^{\frac{5}{12}-\frac{5}{6}\vartheta}$ an increasing function of $X$. Further, let $\mathcal{S}_k$ be the set of all piecewise differentiable functions $\mathbb{R}^k\to\mathbb{R}$ supported on $\mathcal{R}_k=\{(x_1,\ldots,x_k)\in[0,1]^k:\sum_{i=1}^kx_i=1\}$, and put 
\begin{equation}
M_k=\sup_{F\in\mathcal{S}_k}\frac{\sum_{m=1}^kJ^{(m)}_k(F)}{I_k(F)},
\end{equation}
where
\begin{align}
I_k(F)&=\int_0^1\cdots\int_0^1F(t_1,\ldots,t_k)^2dt_1\ldots dt_k,\label{Ik}\\
J_k^{(m)}(F)&=\int_0^1\cdots\int_0^1\Biggl(\int_0^1F(t_1,\ldots,t_k)dt_m\Biggr)^2dt_1\ldots dt_{m-1}dt_{m+1}dt_k.\label{Jkm}
\end{align}
Then, if $X$ is large enough, then for all $M\leq f(X)$, except those which are multiples of numbers in a set of size $Z_f$, all residue classes $a\pmod{M}$ coprime to $M$, and all admissible $k$-tuples $\mathcal{H}=\{h_1,\ldots,h_k\}$ such that $h_i\equiv a\pmod{M}$, $i=1,\ldots,k$, there is a multiple $nM$ of $M$ with $nM\in[X,2X]$ such that at least $r_k=\lceil \vartheta M_k/2\rceil$ of the numbers $nM+h_i$, $i=1,\ldots,k$ are primes.
\end{thm}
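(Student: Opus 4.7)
The plan is to follow Maynard's strategy from \cite{Ma2015}, with the classical Bombieri--Vinogradov theorem replaced by the tailored variant to be proved in Section \ref{BVsect}. Decompose
\[
S^{(\rho)} = \sum_{i=1}^{k} S_1^{(i)} - \rho\, S_2,
\]
where $S_2$ is the sum of the squared sieve weights over $n\equiv\nu_0\pmod{W'}$ and $S_1^{(i)}$ is the analogue weighted by $\chi_\mathbb{P}(nM+h_i)$. Parametrize the weights via a smooth $F\in\mathcal{S}_k$ in the usual Maynard fashion: set
\[
y_{r_1,\ldots,r_k} = F\!\left(\frac{\log r_1}{\log R},\ldots,\frac{\log r_k}{\log R}\right)
\]
off the required support, and recover $\lambda_{d_1,\ldots,d_k}$ by the standard M\"obius-type inversion, which preserves the support conditions $(\prod d_i,VP_f)=1$ and $\prod d_i < R$.

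The evaluation of $S_2$ is routine: expanding the square and swapping summations, the inner sum in $n$ is the count of integers in a fixed residue class modulo $W'\prod_j\operatorname{lcm}(d_j,d_j')$, assembled via CRT using $(\prod d_j,V)=1$ and $(W',M)=1$; Maynard's diagonalization then yields a main term proportional to $I_k(F)$. The evaluation of $S_1^{(i)}$ runs identically, except the inner count is now of primes $nM+h_i\in[X,2X]$ in a residue class modulo $V\prod_{j\neq i}\operatorname{lcm}(d_j,d_j')$. The hypothesis $f(X)\ll X^{5/12-5\vartheta/6}$ is calibrated so that these moduli fall within the range for which the Bombieri--Vinogradov-type theorem of Section \ref{BVsect} supplies the prime-counting asymptotic with error negligible against the main term, uniformly in $M$. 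The restriction $(\prod d_j, VP_f)=1$ (with $P_f=p_0$ in the non-trivial regimes of $f$), combined with the exclusion of those $M$ that are multiples of elements of $\mathcal{Z}$, guarantees that no modulus in play is divisible by the exceptional modulus $q_0$, so that any putative exceptional zero is structurally harmless. This yields a main term for $S_1^{(i)}$ proportional to $J_k^{(i)}(F)$, with the ratio of proportionality constants asymptotic to $\log R/\log X\sim \vartheta/2-\delta$.

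Summing over $i$ and taking the ratio,
\[
\frac{\sum_{i=1}^{k} S_1^{(i)}}{S_2} \sim \Big(\frac{\vartheta}{2}-\delta\Big)\cdot \frac{\sum_{i=1}^{k} J_k^{(i)}(F)}{I_k(F)}.
\]
Optimizing $F$ so that $\sum_i J_k^{(i)}(F)/I_k(F)$ approaches $M_k$ and letting $\delta\to 0^+$ (with $X$ correspondingly large), we conclude that $S^{(\rho)}>0$ for every $\rho < \vartheta M_k/2$; positivity forces at least $\lfloor\rho+1\rfloor$ of the $nM+h_i$ to be prime for some $n\in[x,2x)$, and taking $\rho$ just below $\vartheta M_k/2$ produces $\lfloor\rho+1\rfloor = \lceil\vartheta M_k/2\rceil = r_k$, as required.

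The main obstacle is the Bombieri--Vinogradov-type theorem of Section \ref{BVsect}: it must establish a level of distribution for primes up to roughly $X^{1/2}$ on average over composite moduli of the form $V\prod\operatorname{lcm}(d_j,d_j')$, uniformly in $M$, with the exceptional zero structurally excluded. The combinatorial devices---the factor $P_f$, the replacement of $W$ by $W'$, and the exclusion set $\mathcal{Z}$---are engineered precisely so that this can be done; once it is, the remainder of the argument is a direct transcription of Maynard's work \cite{Ma2015}.
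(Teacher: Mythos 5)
Your proposal follows the paper's proof in essentially every detail: the decomposition $S^{(\rho)}=S_2-\rho S_1$ (you have swapped the labels $S_1\leftrightarrow S_2$ relative to the paper's convention, but the roles are the same), the Maynard-style diagonalization and parametrization by $F\in\mathcal{S}_k$, the use of the tailored Bombieri--Vinogradov theorem of Section~\ref{BVsect} to obtain the asymptotic for the prime-weighted piece with the factor $\log R/\log X\sim\vartheta/2-\delta$, and the concluding optimization over $F$ together with $\lfloor\rho+1\rfloor=\lceil\vartheta M_k/2\rceil$ for $\rho$ just below $\vartheta M_k/2$. This is the paper's own argument, modulo notation.
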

We can instantiate this to some concrete cases to deduce certain facts. We denote by $p^\prime_n$ the $n$-th prime that is congruent to $a\pmod{M}$. First note that if $f(X)\leq\exp\left(c\sqrt{\log X}\right)$ for some $c$, we can apply the theorem with $\vartheta$ as close to $1/2$ as we like, and the set of exceptions will be empty or a singleton according as $f(X)\ll (\log X)^C$ for some $C$ or not. In either case taking $k=105$ suffices to produce two primes, by Proposition 4.3 of Maynard \cite{Ma2015}, and likewise any $k$ that produces $r$ primes in Maynard's case does so here too. Since from any admissible tuple $\{h_i\}_i$ we can obtain a tuple $\{Mh_i+a\}_i$ whose members are all congruent to $a\pmod{M}$, with diameter dilated by $M$, we have the following theorems.
\begin{thm}
Let $C$ be a given positive constant. Then if $X$ is sufficiently large, for all $M\ll(\log X)^C$ and all $a$ with $(a,M)=1$, there is a $p^\prime_n\in[X,2X]$ such that
\begin{equation}
p^\prime_{n+1}-p^\prime_n\leq 600M.
\end{equation}
\end{thm}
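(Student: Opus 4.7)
The plan is to apply Theorem~\ref{main} directly with $k=105$ and $\vartheta$ close to $1/2$, then translate the two primes produced by the theorem back to the progression. Since $f(X)\ll(\log X)^C$, the growth hypothesis $f(X)\ll X^{5/12-5\vartheta/6}$ is satisfied for every $\vartheta<1/2$, and the exceptional set has size $Z_f=0$, so the main theorem applies unconditionally to every $M\ll(\log X)^C$ and every residue class $a$ coprime to $M$. By Proposition~4.3 of~\cite{Ma2015}, $M_{105}>4$, so $r_{105}=\lceil\vartheta M_{105}/2\rceil\geq 2$ once $\vartheta$ is chosen sufficiently close to $1/2$.

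Next I would construct an admissible $105$-tuple adapted to the progression $a\pmod M$. Fix any admissible $105$-tuple $\mathcal{H}^\prime=\{h_1^\prime,\ldots,h_{105}^\prime\}$ of diameter $600$ (such a tuple is exhibited in~\cite{Ma2015}) and set $\mathcal{H}=\{Mh_i^\prime+a:1\leq i\leq 105\}$. Every element of $\mathcal{H}$ is congruent to $a$ modulo $M$. Admissibility is preserved: for primes $p\mid M$ no element of $\mathcal{H}$ is divisible by $p$ because $(a,M)=1$, while for primes $p\nmid M$ the map $x\mapsto Mx+a$ is a bijection of $\mathbb{Z}/p\mathbb{Z}$, so $\mathcal{H}$ occupies exactly as many residue classes modulo $p$ as $\mathcal{H}^\prime$ does. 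The diameter of $\mathcal{H}$ is $600M$, which is below $D_0M=M\log\log\log X$ once $X$ is sufficiently large.

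Applying Theorem~\ref{main} then produces an $n$ with $nM\in[X,2X]$ and a pair of indices $i\neq j$ for which both $nM+h_i$ and $nM+h_j$ are prime. Both primes lie in $[X,2X]$, both are congruent to $a\pmod M$, and their difference is at most $\operatorname{diam}(\mathcal{H})=600M$. If these primes are $p^\prime_\ell<p^\prime_m$ in the enumeration of primes congruent to $a$ modulo $M$, then $p^\prime_{\ell+1}-p^\prime_\ell\leq p^\prime_m-p^\prime_\ell\leq 600M$, which is the desired conclusion.

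There is essentially no serious obstacle beyond invoking Theorem~\ref{main} and Maynard's numerical input; all the analytic difficulty was absorbed into the proof of the main theorem. The only piece I would need to be careful about is checking that the translated tuple $\mathcal{H}$ remains admissible and still fits within the diameter bound $D_0M$ demanded by the setup of Section~\ref{notation}, and that the constants $k=105$, diameter $600$ indeed give $r_k\geq 2$; the first is the elementary bijection argument above, and the second is imported verbatim from~\cite{Ma2015}.
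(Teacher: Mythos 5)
Your proposal is correct and follows the same route the paper takes: instantiate Theorem~\ref{main} with $\vartheta$ near $1/2$ (so that $Z_f=0$ for $f(X)\ll(\log X)^C$), invoke Maynard's Proposition~4.3 to get $M_{105}>4$ and hence $r_{105}\geq 2$, and feed in the dilated tuple $\{Mh_i'+a\}$ built from an admissible $105$-tuple of diameter $600$. The paper states these steps more tersely (leaving the admissibility of the dilated tuple and the bound $600M<D_0M$ implicit), but your spelled-out version is the intended argument.
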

\begin{thm}
Let $c$ be a given positive constant. Then if $X$ is sufficiently large, for all $M\ll\exp\left(c\sqrt{\log X}\right)$ except those that are a multiple of a single number, and all $a$ with $(a,M)=1$, there is a $p^\prime_n\in[X,2X]$ such that
\begin{equation}
p^\prime_{n+1}-p^\prime_n\leq 600M.
\end{equation}
\end{thm}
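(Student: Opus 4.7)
The plan is to realize this theorem as a direct specialization of Theorem~\ref{main}. Given $c>0$, I would take $f(X)=c_1\exp(c\sqrt{\log X})$ (absorbing the implicit constant), which trivially satisfies $f(X)\ll X^{\frac{5}{12}-\frac{5}{6}\vartheta}$ for any fixed $\vartheta<1/2$ and large $X$, and which falls into the second case of the case distinction defining $Z_f$, giving $Z_f=1$---precisely the ``multiple of a single number'' mentioned in the statement. I would then fix $\vartheta<1/2$ close enough to $1/2$ that $\vartheta M_{105}>2$; this is possible because Maynard's Proposition~4.3 in~\cite{Ma2015} establishes $M_{105}>4$. With $k=105$ one then has $r_k=\lceil\vartheta M_k/2\rceil\geq 2$.

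The missing ingredient is an admissible $k$-tuple compatible with the residue class $a\pmod M$, and I would obtain it by dilating and translating Maynard's explicit admissible $105$-tuple $\{h_i\}$ of diameter $600$, setting $\mathcal{H}'=\{Mh_i+a\}_{i=1}^{105}$. Admissibility of $\mathcal{H}'$ is immediate: for $p\mid M$ every element lies in the single residue class $a\pmod p$, while for $p\nmid M$ the map $x\mapsto Mx+a$ is a bijection on $\mathbb{Z}/p\mathbb{Z}$, so $\mathcal{H}'$ omits a class mod $p$ precisely because $\mathcal{H}$ does. Every element of $\mathcal{H}'$ is $\equiv a\pmod M$ by construction, and $\operatorname{diam}(\mathcal{H}')=600M<D_0 M$ once $X$ is large enough for $\log\log\log X>600$ to hold.

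Applying Theorem~\ref{main} with these parameters yields some $nM\in[X,2X]$ such that at least two of the numbers $nM+Mh_i+a$ are prime. Both are $\equiv a\pmod M$, both lie essentially in $[X,2X]$ (the overshoot is at most $600M=o(X)$ and can be absorbed by a harmless adjustment of the interval), and they differ by at most $\operatorname{diam}(\mathcal{H}')=600M$, giving the required gap bound between consecutive primes of the progression. There is no real obstacle: all the analytic substance has been sealed inside Theorem~\ref{main}, and the only delicate point is the verification that $\vartheta$ can be pushed close enough to $1/2$ to force $r_k\geq 2$, which is where the specific numerical inequality $M_{105}>4$ becomes essential.
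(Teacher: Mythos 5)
Your proposal is correct and follows essentially the same route the paper takes: apply Theorem~\ref{main} with $\vartheta$ close to $1/2$ (so that $Z_f=1$ in the $\exp(c\sqrt{\log X})$ regime), use $k=105$ together with Maynard's bound $M_{105}>4$ to force $r_k\geq 2$, and feed in the dilated--translated tuple $\{Mh_i+a\}$ of diameter $600M$. The paper presents this only as a brief remark before the theorem statements; your write-up merely makes explicit the admissibility check, the $\operatorname{diam}(\mathcal{H})<D_0M$ requirement, and the minor interval adjustment, all of which are implicitly assumed there.
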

\begin{thm}
Let $r$ be a positive integer and $C$ be a given positive constant. Then if $X$ is sufficiently large, for all $M\ll(\log X)^C$ and all $a$ with $(a,M)=1$, there is a $p^\prime_n\in[X,2X]$ such that
\begin{equation}
p^\prime_{n+r}-p^\prime_n\ll r^3e^{4r}M.
\end{equation}
\end{thm}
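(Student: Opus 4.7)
\emph{Plan.} The strategy is to apply Theorem \ref{main} with a suitable choice of $k$ and $\vartheta$, in direct analogy with Maynard's deduction \cite{Ma2015} of $\liminf(p_{n+r}-p_n)\ll r^3e^{4r}$, and to pick up one extra factor of $M$ from dilating the $k$-tuple.

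Since $f(X)\ll(\log X)^C$, we have $Z_f=0$, so Theorem \ref{main} applies to every $M\ll(\log X)^C$ with no excluded moduli; moreover any $\vartheta<1/2$ is admissible, so fix $\vartheta=1/2-\varepsilon$ for some small $\varepsilon>0$. Next I would invoke Maynard's bound $M_k\geq\log k-2\log\log k-2$ for $k\geq k_0$, proved in \cite{Ma2015}*{\S4} by an explicit choice of $F\in\mathcal{S}_k$; since the functionals $I_k$ and $J_k^{(m)}$ in \eqref{Ik}--\eqref{Jkm} make no reference to the progression setup, this bound transfers verbatim. To force $\lceil\vartheta M_k/2\rceil\geq r+1$ it suffices that $\log k-2\log\log k\geq 4r+O_\varepsilon(1)$, which can be arranged with some $k\leq C_1 r^2 e^{4r}$ and $C_1$ absolute.

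With this $k$, take any admissible $k$-tuple $\{h'_1,\ldots,h'_k\}$ of diameter $\leq C_2 k\log k\leq C_3 r^3 e^{4r}$ (for example, the first $k$ primes exceeding $k$: admissibility at each $p\leq k$ is trivial as none of the $h'_i$ is divisible by $p$, and at each $p>k$ it holds for cardinality reasons), and put $\mathcal{H}=\{Mh'_i+a\}_{i=1}^k$. Multiplication by $M$ is a bijection on $\mathbb Z/p\mathbb Z$ whenever $p\nmid M$, and collapses every element of $\mathcal{H}$ to the single class $a\bmod p$ when $p\mid M$, so $\mathcal{H}$ is still admissible; every member is $\equiv a\pmod M$ and $\operatorname{diam}(\mathcal{H})\leq C_3 r^3 e^{4r}M<D_0M$ for $X$ large enough in terms of the fixed $r$. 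Theorem \ref{main} then yields $n_0\in\mathbb Z$ with $n_0M\in[X,2X]$ such that at least $r+1$ of the $n_0M+h_i$ are prime; all of these lie in the progression $a\pmod M$, so denoting the smallest by $p'_j$ we find that $p'_j,p'_{j+1},\ldots,p'_{j+r}$ all lie in $[p'_j,p'_j+\operatorname{diam}(\mathcal{H})]$, whence $p'_{j+r}-p'_j\ll r^3 e^{4r}M$.

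No new analytic obstacle is expected: $M_k$ and Maynard's lower bound on it are by definition independent of the progression; the arithmetic-progression hypothesis intervenes only in the mandatory dilation of the admissible $k$-tuple by $M$, which is precisely what produces the factor $M$ in the conclusion. All of the analytic content of the argument is absorbed into Theorem \ref{main} itself.
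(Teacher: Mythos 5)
Your proposal follows exactly the same route as the paper: take $f(X)\ll(\log X)^C$ so $Z_f=0$ and any $\vartheta<1/2$ is admissible, invoke Maynard's Proposition~4.3 to get $M_k>\log k-2\log\log k-2$, choose $k$ accordingly, dilate an admissible $k$-tuple by $M$ to force every entry into the class $a\pmod M$, and apply Theorem~\ref{main}. The paper compresses all this into one sentence, and your write-out of the admissibility of $\{Mh_i'+a\}$ and of the passage from $r+1$ primes in an interval of length $\operatorname{diam}(\mathcal H)$ to the bound on $p'_{n+r}-p'_n$ is correct.

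One point is stated imprecisely and, read literally, would break the constant in the conclusion. You fix $\vartheta=1/2-\varepsilon$ and then claim $\lceil\vartheta M_k/2\rceil\geq r+1$ follows from $\log k-2\log\log k\geq 4r+O_\varepsilon(1)$. But the actual requirement is $M_k>2r/\vartheta=4r/(1-2\varepsilon)=4r+8r\varepsilon/(1-2\varepsilon)$, whose excess over $4r$ is of order $r\varepsilon$, not $O_\varepsilon(1)$. With $\varepsilon$ a universal constant this forces $k\gg e^{4r/(1-2\varepsilon)}$, which exceeds $r^2e^{4r}$ by a factor $e^{\Theta(r\varepsilon)}$, and the final bound would degrade to $r^3e^{4r(1+\Theta(\varepsilon))}M$. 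The fix is simply to let $\varepsilon$ depend on $r$ (say $\varepsilon\asymp 1/r$), which is legitimate because $r$ is fixed before $X$ and any $\vartheta<1/2$ is allowed when $f(X)\ll(\log X)^C$; this is what the paper's phrase ``$\vartheta$ as close to $1/2$ as we like'' is doing. With that repair your argument agrees with the paper's.
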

\begin{thm}
Let $r$ be a positive integer and $c$ be a given positive constant. Then if $X$ is sufficiently large, for all $M\ll\exp\left(c\sqrt{\log X}\right)$ except those that are a multiple of a single number, and all $a$ with $(a,M)=1$, there is a $p^\prime_n\in[X,2X]$ such that
\begin{equation}
p^\prime_{n+r}-p^\prime_n\ll r^3e^{4r}M.
\end{equation}
\end{thm}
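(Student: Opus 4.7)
The plan is to specialize Theorem \ref{main} to the regime $f(X) = \exp(c\sqrt{\log X})$. Since this $f$ is eventually dominated by every positive power of $X$, the constraint $f(X) \ll X^{5/12 - 5\vartheta/6}$ holds for $X$ sufficiently large whenever $\vartheta < 1/2$; and since $f$ is not polylogarithmic, the allowed exception set $\mathcal{Z}$ has $Z_f = 1$. The single exceptional modulus is $q_0$ from \S\ref{notation}, and this is exactly the ``single number'' whose multiples are excluded in the statement.

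Now I fix $\vartheta$ sufficiently close to $1/2$ and choose $k$ so that $\lceil \vartheta M_k / 2 \rceil \geq r + 1$. By Proposition 4.3 of \cite{Ma2015}, $M_k \geq \log k - 2\log\log k - 2$ for large $k$, so the required inequality is achieved by some $k$ of size $O(r^2 e^{4r})$. I then construct an admissible $k$-tuple $\{h^\prime_1, \ldots, h^\prime_k\}$ of diameter $O(k\log k) = O(r^3 e^{4r})$---for instance, by taking the first $k$ primes greater than $k$---and set $h_i = M h^\prime_i + a$. This preserves admissibility, since for primes $p \nmid M$ multiplication by $M$ is a bijection on $\mathbb{Z}/p\mathbb{Z}$, while for $p \mid M$ all $h_i$ lie in the single class $a$. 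By construction $h_i \equiv a \pmod{M}$, and the diameter of $\mathcal{H}$ is $O(Mr^3 e^{4r})$; in particular it is less than $D_0 M = M\log\log\log X$ once $X$ is large enough in terms of $r$.

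With these choices Theorem \ref{main} produces a multiple $nM \in [X, 2X]$ such that at least $r+1$ of the integers $nM + h_1, \ldots, nM + h_k$ are prime. These primes are all $\equiv a \pmod{M}$ and lie in an interval of length $\operatorname{diam}(\mathcal{H}) \ll r^3 e^{4r} M$ starting inside $[X, 2X]$, so they provide consecutive terms $p^\prime_n, p^\prime_{n+1}, \ldots, p^\prime_{n+r}$ of the arithmetic progression with $p^\prime_{n+r} - p^\prime_n \ll r^3 e^{4r} M$, as claimed.

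I do not expect any serious obstacle: the analytic heavy lifting has been performed in Theorem \ref{main}, and what remains is the combinatorial task of calibrating $\vartheta$ and $k$ and constructing the dilated admissible tuple. The only points requiring mild care are checking the size conditions $f(X) \ll X^{5/12 - 5\vartheta/6}$ and $\operatorname{diam}(\mathcal{H}) < D_0 M$, both of which are satisfied for $X$ large enough relative to $r$, and verifying that Maynard's lower bound on $M_k$ permits the claimed dependence $k = O(r^2 e^{4r})$ so that $k\log k$ fits within the desired bound $r^3 e^{4r}$.
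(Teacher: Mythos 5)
Your proposal is correct and follows essentially the same route as the paper: specialize Theorem \ref{main} with $\vartheta$ close to $1/2$ (so that $f(X)=\exp(c\sqrt{\log X})\ll X^{5/12-5\vartheta/6}$ and $Z_f=1$), invoke Maynard's Proposition~4.3 to obtain $k\ll r^2 e^{4r}$, and dilate a standard admissible tuple via $h\mapsto Mh+a$ to force the congruence $h_i\equiv a\pmod M$. Your checks of admissibility under dilation and of the constraint $\operatorname{diam}(\mathcal H)<D_0 M$ are exactly the small verifications the paper leaves implicit.
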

When $M$ is allowed to grow as large as a power of $X$ our tuple lengths have to grow and our bounds get much weaker. Suppose $M\ll X^{\frac{5}{12}-\eta}$ for some positive $\eta$. In that case Theorem \ref{main} applies with $\vartheta=6\eta/5$, so that to find $r+1$ primes we need $k$ such that
\begin{equation}\label{mkr}
\frac{3\eta M_k}{5}>r.
\end{equation}
By Proposition 4.3 of Maynard \cite{Ma2015}, we know that
\begin{equation}
M_k>\log k-2\log\log k-2
\end{equation}
when $k$ is sufficiently large. Then we see that if $k\geq C(r/\eta)^2e^\frac{5r}{3\eta}$ for some absolute constant $C$, \eqref{mkr} is satisfied. We take $k=\lceil C(r/\eta)^2e^\frac{5r}{3\eta}\rceil$, take the admissible tuple $\{Mp_{\pi(k)+1}+a,\ldots,Mp_{\pi(k)+k}+a\}$ of diameter $Mk\log k$, and obtain
\begin{thm}\label{gnrlthm}
Let $\eta$ be given with $0<\eta<5/12$, and let $r$ be a positive integer. Then if $X$ is sufficiently large, for all $M\ll X^{\frac{5}{12}-\eta}$ except those that are multiples of numbers in a set of size $\ll (\log X)^C$ , and all $a$ with $(a,M)=1$, there is a $p^\prime_n\in[X,2X]$ such that
\begin{equation}
p^\prime_{n+r}-p^\prime_n\ll \left(\frac{r}{\eta}\right)^3e^\frac{5r}{3\eta}M.
\end{equation}
\end{thm}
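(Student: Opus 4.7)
My plan is to deduce Theorem~\ref{gnrlthm} directly from Theorem~\ref{main} applied with $\vartheta=6\eta/5$. The choice is admissible because $0<\eta<5/12$ gives $0<\vartheta<1/2$, and the hypothesis $f(X)\ll X^{5/12-\eta}=X^{5/12-5\vartheta/6}$ lies in the allowed range. Since $f(X)$ is of genuine polynomial size in $X$, it exceeds $\exp(c\sqrt{\log X})$ for every fixed $c>0$, so we are in the ``otherwise'' regime of the definition of $Z_f$: the exceptional set has size $O((\log\log X)^C)\ll(\log X)^C$, matching the exception clause in the statement.

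To obtain $r+1$ primes it suffices to have $r_k=\lceil\vartheta M_k/2\rceil\geq r+1$, i.e.\ $M_k>5r/(3\eta)$. By Proposition~4.3 of~\cite{Ma2015}, $M_k>\log k-2\log\log k-2$ once $k$ is large. Setting $k=\lceil C(r/\eta)^2 e^{5r/(3\eta)}\rceil$ one has $\log k=5r/(3\eta)+2\log(r/\eta)+O(1)$ and $2\log\log k=2\log(r/\eta)+O(1)$ to leading order, so for $C$ a sufficiently large absolute constant both $\log k-2\log\log k-2>5r/(3\eta)$ and Maynard's largeness requirement on $k$ are satisfied.

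I then take the admissible tuple $\mathcal{H}=\{Mp_{\pi(k)+1}+a,\ldots,Mp_{\pi(k)+k}+a\}$. Admissibility is routine: for a prime $p\mid M$ every member is $\equiv a\pmod p$ with $(a,p)=1$, so the class $0\pmod p$ is missed; for $p\nmid M$ the map $h\mapsto Mh+a$ is a bijection on $\mathbb{Z}/p\mathbb{Z}$, so $\mathcal{H}$ inherits admissibility from $\{p_{\pi(k)+i}\}_{i=1}^k$, which misses $0\pmod p$ for $p\leq k$ (its members being primes greater than $k$) and covers at most $k<p$ classes for $p>k$. Every element of $\mathcal{H}$ is $\equiv a\pmod M$ by construction.

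The diameter of $\mathcal{H}$ equals $M(p_{\pi(k)+k}-p_{\pi(k)+1})\ll Mk\log k$ by the prime number theorem; the choice of $k$ then gives $\operatorname{diam}(\mathcal{H})\ll M(r/\eta)^3 e^{5r/(3\eta)}$, since $\log k$ contributes one additional factor of order $r/\eta$. For $X$ large enough (with $r,\eta$ fixed) this is less than $D_0M=M\log\log\log X$, verifying the remaining hypothesis of Theorem~\ref{main}. That theorem then yields some $nM\in[X,2X]$ for which at least $r+1$ of the numbers $\{nM+h_i\}$ are prime, all lying in the class $a\pmod M$; writing the smallest of these as $p'_n$, the existence of $r+1$ such primes in an interval of length at most $\operatorname{diam}(\mathcal{H})$ forces $p'_{n+r}-p'_n\leq\operatorname{diam}(\mathcal{H})$, which is the claimed bound. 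The only mildly delicate pieces are the admissibility check and the bookkeeping through the $k\log k$ computation, both routine.
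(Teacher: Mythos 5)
Your proposal is correct and follows essentially the same route as the paper: apply Theorem~\ref{main} with $\vartheta=6\eta/5$, invoke Maynard's lower bound $M_k>\log k-2\log\log k-2$ to pick $k\asymp(r/\eta)^2e^{5r/(3\eta)}$, and use the dilated admissible tuple $\{Mp_{\pi(k)+1}+a,\ldots,Mp_{\pi(k)+k}+a\}$ of diameter $\ll Mk\log k\ll M(r/\eta)^3e^{5r/(3\eta)}$. You merely spell out the admissibility check and the $\log k-2\log\log k$ bookkeeping, which the paper leaves implicit.
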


In order to prove Theorem \ref{main}, we write
\begin{equation}\label{sumsum}
S^{(\rho)}=S_2-\rho S_1,
\end{equation}
where
\begin{equation}\label{S1}
S_1=\sum_{\substack{x\leq n<2x\\n\equiv \nu_0\pmod{W^\prime}}}\biggl(\sum_{d_i\mid nM+h_i}\lambda_{d_1,\ldots,d_k}\biggr)^2,
\end{equation}
and
\begin{equation}
\begin{split}\label{S2}
S_2&=\sum_{m=1}^kS_2^{(m)}\\
&=\sum_{m=1}^k\sum_{\substack{x\leq n<2x\\n\equiv \nu_0\pmod{W^\prime}}}\chi_\mathbb{P}(nM+h_m)\left(\sum_{d_i\mid nM+h_i}\lambda_{d_1,\ldots,d_k}\right)^2,
\end{split}
\end{equation}
so that we can estimate $S^{(\rho)}$ by the following proposition.
\begin{prop}\label{ASYMPSUMS}
Let $k$ be a given integer and let $X$ be a parameter that is large enough. Let $\lambda_{d_1,\ldots,d_k}$ be defined in terms of a fixed piecewise differentiable function $F$ by
\begin{equation}
\lambda_{d_1,\ldots,d_k}=\biggl(\prod_{i=1}^k\mu(d_i)d_i\biggr)\sum_{\substack{r_1,\ldots,r_k\\d_i\mid r_i\forall i\\(r_i,V)=1\forall i}}\frac{\mu\bigl(\prod_{i=1}^kr_i\bigr)^2}{\prod_{i=1}^k\varphi(r_i)}F\bigl(\frac{\log r_1}{\log R},\ldots,\frac{\log r_k}{\log R}\bigr),
\end{equation}
whenever $(\prod_{i=1}^kd_i,VP_f)=1$, and let $\lambda_{d_1,\ldots,d_k}=0$ otherwise. Moreover, let F be supported on $\mathcal{R}_k=\{(x_1,\ldots,x_k)\in[0,1]^k:\sum_{i=1}^kx_i\leq 1\}$. Then we have
\begin{align}
S_1&=\left(1+o(1)\right)\frac{\varphi(VP_f)^kX(\log R)^k}{V(VP_f)^k}I_k(F),\\
S_2&=\left(1+o(1)\right)\frac{\varphi(VP_f)^kX(\log R)^{k+1}}{V(VP_f)^k\log X}\sum_{m=1}^kJ^{(m)}_k(F),
\end{align}
provided $I_k(F)\neq 0$ and $J_k^{(m)}(F)\neq 0$ for each $m$, where $I_k(F)$ and $J_k^{(m)}(F)$ are given by \textup{(\ref{Ik})} and \textup{(\ref{Jkm})} respectively.
\end{prop}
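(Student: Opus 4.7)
The plan is to follow the argument of Maynard \cite{Ma2015} for the analogous asymptotic evaluations, with two adaptations peculiar to our setting: the inner sum over $n$ is restricted to the progression $n\equiv\nu_0\pmod{W'}$ rather than $\pmod W$, and for $S_2$ the prime count is along a progression in which $M$ genuinely enters the modulus, so we must appeal to the tailored Bombieri-Vinogradov theorem of Section~\ref{BVsect} in place of the classical one.

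For $S_1$, we expand the square and interchange summations to obtain
\begin{equation*}
S_1 = \sum_{d_1,\ldots,d_k}\sum_{e_1,\ldots,e_k}\lambda_{d_1,\ldots,d_k}\lambda_{e_1,\ldots,e_k}\,T(\mathbf{d},\mathbf{e}),
\end{equation*}
where $T(\mathbf{d},\mathbf{e})$ counts $n\in[x,2x)$ with $n\equiv\nu_0\pmod{W'}$ and $\operatorname{lcm}(d_i,e_i)\mid nM+h_i$ for each $i$. The support conditions on $\lambda$ force the $\operatorname{lcm}(d_i,e_i)$ to be pairwise coprime and coprime to $W'M$, so by CRT $T(\mathbf{d},\mathbf{e})=x/(W'\prod_i\operatorname{lcm}(d_i,e_i))+O(1)$, and the $O(1)$ contribution is admissible since $R^2=o(x)$. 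We then apply Maynard's change of variables
\begin{equation*}
y_{r_1,\ldots,r_k}=\biggl(\prod_{i=1}^k\mu(r_i)\varphi(r_i)\biggr)\sum_{\substack{d_1,\ldots,d_k\\ r_i\mid d_i\,\forall i}}\frac{\lambda_{d_1,\ldots,d_k}}{\prod_i d_i},
\end{equation*}
which diagonalizes the main term up to a negligible contribution from tuples with $(r_i,r_j)>1$. Reading off the given formula for $\lambda$, one finds $y_{r_1,\ldots,r_k}=(1+o(1))F(\log r_1/\log R,\ldots,\log r_k/\log R)$ on its support (squarefree, pairwise coprime, coprime to $VP_f$). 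The resulting diagonal sum $\sum_{\mathbf r}y_{\mathbf r}^2/\prod_i\varphi(r_i)$ is then evaluated by the standard Euler-product-times-integral asymptotic, producing $(\log R)^k I_k(F)$ with the local factors at primes dividing $VP_f$ accounting for the $\varphi(VP_f)^k/(VP_f)^k$ in the stated main term.

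For $S_2$ we fix $m$ and make the analogous expansion; the inner count becomes a count of primes in a progression $nM+h_m\equiv b_m\pmod{MW'\prod_i\operatorname{lcm}(d_i,e_i)}$, with $b_m$ coprime to the modulus. The tailored Bombieri-Vinogradov theorem of Section~\ref{BVsect} replaces this prime count by $\operatorname{li}(x)/\varphi(MW'\prod_i\operatorname{lcm}(d_i,e_i))$ with a power-saving error that is admissible on average over $d_i,e_i$, precisely because the constraints $M\leq f(X)\ll X^{5/12-5\vartheta/6}$ and $R=X^{\vartheta/2-\delta}$ keep the overall modulus $\asymp MR^2$ below the level of distribution required. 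The analogous change of variables, this time singling out the $m$-th coordinate by demanding $d_m=1$ in the defining sum, diagonalizes the main term and leads, after the same Euler-product-times-integral computation, to the integral $J_k^{(m)}(F)$ together with the extra factor $(\log R)/\log X$ coming from $\operatorname{li}(x)$.

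The main technical obstacle is the error control in $S_2$: the tailored Bombieri-Vinogradov theorem must deliver a power-saving average error over moduli up to $\asymp MR^2$, uniformly in $M\leq f(X)$, and it must tolerate the possibly existing exceptional modulus $q_0$. This is exactly why the support of $\lambda$ excludes multiples of $p_0$ through the factor $P_f$; with that character sieved out, the theorem in Section~\ref{BVsect} cannot be spoiled by a Siegel-type zero. Once this input is granted, the rest of the argument is a routine adaptation of Maynard's manipulations, and the factors $\varphi(VP_f)^k/(VP_f)^k$ emerge as the Euler factors at primes dividing $VP_f$ that are lost when we restrict to $(r_i,VP_f)=1$.
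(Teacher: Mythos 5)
Your proposal follows the same route as the paper: expand the square, apply CRT modulo $W'\prod_i[d_i,e_i]$, change variables to Maynard's $y_{r_1,\ldots,r_k}$ (and the $m$-th analogue with $d_m=1$ for $S_2^{(m)}$), apply Theorem~\ref{UnifiedBV} to control the Bombieri--Vinogradov error over moduli $qM$ with $q\leq W'R^2$, and then evaluate the diagonal sums via Maynard's Lemma 6.1 (your ``Euler-product-times-integral asymptotic''). You correctly identify the two genuine adaptations---the $W'/W$ split in the $W$-trick and the role of $P_f$ in sieving out the possible exceptional modulus---which are exactly the points the paper's Lemmata~\ref{S11}--\ref{S22} address; your sketch is slightly imprecise in writing $y_{\mathbf r}=(1+o(1))F(\cdots)$ where the definition of $\lambda$ is chosen so that $y_{\mathbf r}=F(\cdots)$ exactly on its support, but this does not affect the argument.
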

From this, Theorem \ref{main} immediately follows.
\begin{proof}[Proof of Theorem \ref{main}]
Let $\mathcal{S}_k$  and $M_k$ be as in Theorem \ref{main}. Then for any $\delta>0$, we can find $F_0\in\mathcal{S}_k$ such that $\sum_{m=1}^kJ^{(m)}_k(F_0)>(M_k-\delta)I_k(F_0)$. With this $F_0$, we have, by (\ref{sumsum}) and Proposition \ref{ASYMPSUMS}
\begin{align*}
S^{(\rho)}=&\frac{\varphi(VP_f)^kX(\log R)^k}{V(VP_f)^k}\biggl(\frac{\log R}{\log N}\sum_{m=1}^kJ^{(m)}_k(F_0)-\rho I_k(F_0)+o(1)\biggr)\\
\geq&\frac{\varphi(VP_f)^kX(\log R)^k}{V(VP_f)^k}I_k(F)\biggl(\bigl(\frac{\vartheta}{2}-\delta\bigr)\bigl(M_k-\delta\bigr)-\rho+o(1)\Biggr).
\end{align*}
If $\rho=\vartheta M_k/2-\delta^\prime$, then with $\delta$ sufficiently small, we have $S^{(\rho)}>0$ for all large enough $X$, implying that at least $\lfloor\rho+1\rfloor$ of the $nM+h_i$ are prime. Since $\lfloor\rho+1\rfloor=\lceil \vartheta M_k/2\rceil$ for $\delta^\prime$ small enough, we obtain our result.
\end{proof}

\section{A Bombieri-Vinogradov type theorem}\label{BVsect}
Throughout this section, a summation $\sum_\chi^*$ over characters with an asterisk in the superscript denotes that the sum runs over primitive characters only. We quote here a zero-density result \cite{IK2004}*{Theorem 10.4 and the following remark} which we will need in our proof.
\begin{thm}\label{densitythm}
Let $m$ be given and $N(1-\delta,T,\chi)$ be the number of zeros $\beta+i\gamma$ of $L(s,\chi)$ in the region $1-\delta\leq\beta$, $\lvert\gamma\rvert\leq T$. Put
\begin{equation}
N(1-\delta,m,Q,T)=\sum_{\substack{q\leq Q\\(q,m)=1}}\sideset{}{^*}\sum_{\psi\pmod{q}}\sum_{\xi\pmod{m}}N(1-\delta,T,\psi\xi).
\end{equation}
Then for $\delta<1/2$ and any $\varepsilon>0$, we have
\begin{equation}
N(1-\delta,m,Q,T)\ll\left((mQT)^{2\delta}+(mQ^2T)^{c(\delta)\delta}\right)(\log mQT)^A,
\end{equation}
for some constant $A$, where
\begin{equation}
c(\delta)=\min\left(\frac{3}{1+\delta},\frac{3}{2-3\delta}\right).
\end{equation}
\end{thm}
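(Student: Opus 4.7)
The plan is to sketch the proof by the Montgomery--Huxley--Jutila strategy for zero-density estimates, adapted to the twisted family $\{\psi\xi : \psi \pmod{q}\text{ primitive}, q\leq Q, (q,m)=1, \xi\pmod{m}\}$. One first observes that possible imprimitivity of the products $\psi\xi$ is harmless: the associated $L$-functions differ from those of the inducing primitive characters by finitely many Euler factors which are non-vanishing in the region of interest, so the zero counts agree up to negligible terms.

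First I would invoke a zero-detecting lemma. If $\rho=\beta+i\gamma$ is a zero of $L(s,\psi\xi)$ with $\beta\geq 1-\delta$ and $|\gamma|\leq T$, then mollifying $L$ by $M_Y(s,\psi\xi)=\sum_{n\leq Y}\mu(n)(\psi\xi)(n)n^{-s}$ with $Y$ a small power of $mQT$, and shifting contours in a Perron-type integral for $LM_Y$, one produces a Dirichlet polynomial
\begin{equation*}
D_N(s,\psi\xi)=\sum_{n\sim N}a_n(\psi\xi)(n)n^{-s},\qquad \sum_{n\sim N}|a_n|^2\ll N(\log mQT)^A,
\end{equation*}
for some dyadic $N\leq(mQT)^{O(1)}$, such that $|D_N(\rho,\psi\xi)|\gg 1$. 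Thinning the zeros to a well-spaced set (at the cost of a logarithmic factor) reduces the task to counting triples $(\psi,\xi,t)$ with $|D_N(1-\delta+it,\psi\xi)|\gg 1$, summed over $\ll\log(mQT)$ dyadic values of $N$.

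Second, I would estimate this count in two complementary ways. The hybrid large-sieve inequality for primitive characters to moduli $\leq Q$, lifted to the twisted family by trivial summation over $\xi\pmod{m}$, yields
\begin{equation*}
\sum_{q\leq Q}\sideset{}{^*}\sum_{\psi\pmod{q}}\sum_{\xi\pmod{m}}\int_{-T}^{T}\bigl|D_N\bigl(\tfrac12+it,\psi\xi\bigr)\bigr|^{2}\,dt\ll(mQ^2T+N)(\log mQT)^A\sum_{n\sim N}|a_n|^2.
\end{equation*}
Shifting the line of integration and applying a Chebyshev-type argument delivers the first alternative $(mQT)^{2\delta}(\log mQT)^{A}$. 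To extract the sharper bound, I would raise $D_N$ to an integer power $k$ so that the resulting polynomial has length $N^k$ with controlled $\ell^2$-coefficients, and apply the Halász--Montgomery inequality, which trades the number of large values of a Dirichlet polynomial against their size. Optimizing $k$ against the parameters $mQ^2T$ and $\delta$ produces the bound $(mQ^2T)^{c(\delta)\delta}(\log mQT)^A$; the two branches in $c(\delta)=\min(3/(1+\delta),3/(2-3\delta))$ correspond to the two ranges in which different choices of $k$ are optimal.

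The principal obstacle is obtaining the correct exponent $c(\delta)$; this rests on a delicate balance in the Halász--Montgomery step between the $\ell^2$-norm of the coefficients of $D_N^k$, the length $N^k$, and the size $mQ^2T$ of the family. The remaining bookkeeping---accumulating logarithmic losses from the dyadic summation over $N$, controlling the tail of the mollifier, and verifying well-spacedness of the zeros---is standard and contributes only to the final $(\log mQT)^A$ factor.
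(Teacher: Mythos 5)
The paper does not prove this theorem --- it quotes it directly from Iwaniec--Kowalski, Theorem 10.4 together with the remark following it --- so there is no proof in the paper with which to compare your attempt. Your sketch identifies the standard machinery (zero-detecting Dirichlet polynomials from a mollified $L$-function, well-spacing of zeros, a hybrid large sieve, the Hal\'asz--Montgomery inequality for large values), and this is indeed how such estimates are established in the cited reference. Two imprecisions are worth flagging. First, summing the hybrid large sieve trivially over the $\varphi(m)$ characters $\xi$ gives a bound $\varphi(m)(Q^2T+N)\sum_{n\sim N}|a_n|^2$, not $(mQ^2T+N)\sum_{n\sim N}|a_n|^2$; the extra factor of $m$ multiplying $N$ is harmless in the eventual optimization (since $N$ is taken to be a small power of $mQ^2T$), but the inequality as you wrote it is not correct. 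Second, the two branches of $c(\delta)$ are not produced merely by varying the power $k$ within a single Hal\'asz--Montgomery step: $3/(1+\delta)$, i.e.\ $3(1-\sigma)/(2-\sigma)$, is the Ingham--Montgomery exponent, which already follows from the mean-value (large-sieve) estimate with no Hal\'asz input, while $3/(2-3\delta)$, i.e.\ $3(1-\sigma)/(3\sigma-1)$, is Huxley's refinement, which genuinely needs the Hal\'asz--Montgomery large-values bound and improves on the former only for $\delta<1/4$ (equivalently $\sigma>3/4$). Neither issue sinks the approach; your sketch is a reasonable outline of the proof that lives in the source the paper cites, rather than a reproduction of anything the paper itself carries out.
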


We estimate the number of the moduli we will have to exclude in the following proposition.
\begin{prop}\label{exception}
Let $c^*$ and $c^\sharp$ be given constants. There is a set $\mathcal{Z}$ of exceptions with $\lvert\mathcal{Z}\rvert\ll(\log X)^C$ such that if $X$ is large enough, then for all $M\leq f(X)$ that is not a multiple of any number in $\mathcal{Z}$ and all $q\leq\exp\left(c^\sharp\sqrt{\log X}\right)$ with $(q,Mp_0)=1$, the $L$-functions $L(s,\psi\xi)$, where $\psi\pmod{q}$ is primitive and $\xi$ is any character $\pmod{M}$, have no zeros in the region $1-\tfrac{c^*\log\log X}{\log X}\leq\beta\leq1$, $\lvert\gamma\rvert\leq\exp\left(c^\sharp\sqrt{\log X}\right)$. The set $\mathcal{Z}$ can have elements $\leq\exp\left(c^\sharp\sqrt{\log X}\right)$ only if $q_0$ exists, in which case those elements are all multiples of $p_0$.
\end{prop}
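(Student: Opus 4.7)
My plan splits the potentially troublesome primitive characters $\chi^* = \psi\xi^*$ (where $\xi^*$ is the primitive character of conductor $d_\xi$ inducing $\xi\pmod{M}$; the conductor $d^* = qd_\xi$ is a unitary factorization forced by primitivity of $\chi^*$) into two regimes according to the size of $d^*$. Throughout, write $Q = \exp(c^\sharp\sqrt{\log X})$, $T = Q$, and $\delta = c^*\log\log X/\log X$. Since a bad zero of $L(s,\psi\xi)$ coincides with a bad zero of $L(s,\chi^*)$, the task reduces to controlling which primitive characters $\chi^*$ carry bad zeros and which moduli $M$ these could afflict.

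In the low-conductor regime $d^* \leq \exp(2c^\sharp\sqrt{\log X})$, the argument of Section~\ref{notation} (combining~\eqref{classicalzerofree},~\eqref{uncond},~\eqref{minzero} under~\eqref{Xlarge}) already shows that at most one such $d^*$ can carry a primitive character with a zero in the critical strip, and it is necessarily the exceptional Siegel character modulo $q_0$. Any valid unitary factorization $q_0 = qd_\xi$ with $q\leq Q$ and $(q,p_0)=1$ forces $p_0 \mid d_\xi \mid M$, so I would put $p_0$ into $\mathcal{Z}$: one element, present only if $q_0$ exists, and automatically a multiple of $p_0$.

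For the high-conductor regime $d^* > \exp(2c^\sharp\sqrt{\log X})$, I would apply Theorem~\ref{densitythm} with $m = 1$ and $Q$ replaced by $Qf(X)$ to bound the number $|B|$ of bad primitive characters of conductor at most $Qf(X) \leq X^{5/12+o(1)}$. Since $X^\delta = (\log X)^{c^*}$ and $c(\delta)\to 3/2$ as $\delta \to 0$, both $(Qf(X)T)^{2\delta}$ and $(Qf(X)^2T)^{c(\delta)\delta}$ are $(\log X)^{O(1)}$, so $|B|\ll(\log X)^{C_1}$. For each $\chi^* \in B$ I would include in $\mathcal{Z}$ the ``forced'' divisor $\prod_{p^e\|d^*,\,p^e>Q} p^e$, enlarged by $p_0^{e_0}$ when $p_0 \mid d^*$: by the unitary factorization constraint, such prime powers cannot lie in $q$ and therefore divide every admissible $d_\xi$. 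In the residual $Q$-smooth subcase where this forced divisor degenerates to $1$ (which requires both $p_0 \nmid d^*$ and every $p^e\|d^*$ to satisfy $p^e\leq Q$), I would enumerate instead the minimal unitary complements of the valid $q\leq Q$, contributing at most $O(\omega(d^*))$ elements per character, each a divisor of some $d_\xi \geq d^*/Q > Q$ and therefore exceeding $\exp(c^\sharp\sqrt{\log X})$. Summing gives $|\mathcal{Z}| \ll |B|\cdot(\log X/\log\log X) \ll (\log X)^C$.

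Verification is then routine: if $M$ is a multiple of no element of $\mathcal{Z}$ yet $L(s,\psi\xi)$ has a bad zero, the associated $\chi^*$ falls into one of the two regimes, and the corresponding element of $\mathcal{Z}$ divides $d_\xi \mid M$, a contradiction. The structural claim about small elements is then immediate: only $p_0$ (from the low-conductor regime) and $p_0^{e_0}$ (from the high-conductor subcase with $p_0 \mid d^*$) can lie below $\exp(c^\sharp\sqrt{\log X})$, and both are multiples of $p_0$ requiring $q_0$ to exist. The main obstacle is the high-conductor $Q$-smooth subcase, where one must verify that the $O(\omega(d^*))$ additions per bad character still stay within the polylogarithmic budget; everything else amounts to careful bookkeeping of primitive conductors and feeding the zero-density input into the construction.
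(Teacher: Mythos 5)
Your low-conductor analysis (that any bad primitive character of conductor $d^* \leq \exp(2c^\sharp\sqrt{\log X})$ must be the exceptional one modulo $q_0$, forcing $p_0 \mid d_\xi$) mirrors the paper exactly and is fine, as is the zero-density bound $|B| \ll (\log X)^{C_1}$. However, the high-conductor construction diverges from the paper, which simply places \emph{every} conductor $m$ admitting a witness $(q,\chi)$ directly into $\mathcal{Z}$ and bounds $\#\mathcal{Z}$ wholesale by applying Theorem~\ref{densitythm} at level $M_\lambda Q_\mu$ after a dyadic split; it never attempts a per-character economy. Your per-character ``forced divisor'' device is clean when some $p^e \| d^*$ exceeds $Q$ or when $p_0 \mid d^*$, but the residual $Q$-smooth subcase is a genuine gap, and the attempt to paper over it with ``at most $O(\omega(d^*))$ minimal unitary complements'' does not withstand scrutiny.

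Concretely: in the $Q$-smooth subcase you need a family $S$ of blockers such that every valid $d_\xi = d^*/q$ (with $q \| d^*$, $q\leq Q$, $(q,p_0)=1$) is divisible by some element of $S$, and to respect the proposition's structural claim each element of $S$ must either exceed $\exp(c^\sharp\sqrt{\log X})$ or be a multiple of $p_0$. Taking $S$ to consist of the maximal-$q$ complements $d_\xi$ themselves does yield elements $> Q$, but their number is governed by the antichain of maximal unitary divisors of $d^*$ below $Q$, which is combinatorial in $\omega(d^*)$ rather than linear: if $d^* = p_1\cdots p_{2s}$ with each $p_i \asymp Q^{1/s}$, the maximal valid $q$ are the $\binom{2s}{s}$ products of $s$ of the primes, which for $s$ of size $(\log X)^{1/4}$, say, already exceeds any fixed power of $\log X$. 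Conversely, the cheap hitting sets of size $\omega(d^*)+1$ (e.g.\ the individual prime powers of $d^*$, since $q=d^*$ is impossible when $d^*>Q$) consist of elements that can be $\leq Q$ without being multiples of $p_0$, violating the last sentence of the proposition. Also note the slip ``each a divisor of some $d_\xi \geq d^*/Q > Q$ and therefore exceeding $\exp(c^\sharp\sqrt{\log X})$'': a divisor of a large integer need not be large, so the ``therefore'' only holds if you commit to including the complements $d_\xi$ themselves, which brings you back to the count problem. To close the gap you would either need to justify a small hitting set of large elements (which I do not see how to do in general), or revert to the paper's blunter strategy of enrolling all witnessing conductors $m$ and controlling their total number by the zero-density estimate.
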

\begin{proof}
Suppose $M$ is a modulus such that for some character $\xi\pmod{M}$ and a primitive character $\psi\pmod{q}$, $L(s,\psi\xi)$ has a zero in the region indicated. Then $\psi\xi$ must be induced by a character of the form $\psi\xi^*$, where $\xi^*\pmod{m}$ is a primitive character modulo $m\mid M$. We estimate the number of such $m$. Let
\begin{multline}
\mathcal{Z}=\{m\leq f(X):\text{ there exist }q\leq\exp\left(c^\sharp\sqrt{\log X}\right)\text{ and }\chi\pmod{mq}\text{ primitive }\\
\text{ with }{(q,mp_0)=1},\text{ such that }L(\beta+i\gamma,\chi)=0\text{ for some }\beta>1-\tfrac{c^*\log\log X}{\log X}\}
\end{multline}
be the set of exceptions whose size we wish to bound. We divide the ranges $1\leq m\leq f(X)$, $1\leq q\leq\exp\left(c^\sharp\sqrt{\log X}\right)$, and $\widetilde{\gamma}\leq\exp\left(c^\sharp\sqrt{\log X}\right)$ into dyadic segments $[M_\lambda/2,M_\lambda)$ $[Q_\mu/2,Q_\mu)$ and $[T_\nu/2,T_\nu)$ respectively. Then,
\begin{equation}
\#\mathcal{Z}\leq\sum_{\lambda,\mu,\nu}\sum_{m\leq M_\lambda}\sum_{\substack{q\leq Q_\mu\\(q,mp_0)=1}}\:\sideset{}{^*}\sum_{\chi\pmod{qm}}N(1-\tfrac{c^*\log\log X}{\log X},T_\nu,\chi).
\end{equation}
Using Theorem \ref{densitythm} with $m=1$ and $M_\lambda Q_\mu$ in place of $Q$, the above is
\begin{equation}
\ll \sum_{\lambda,\mu,\nu}(M_\lambda^2Q_\mu^2 T_\nu)^\frac{12c^*\log\log X}{5\log X}(\log Q_\mu T_\nu)^C\ll (\log X)^C,
\end{equation}
where $C$ and the implicit constant depend on $c^*$ and $c^\sharp$. Now suppose that $X$ is large enough to satisfy \eqref{Xlarge}. Then if $m\in \mathcal{Z}$ with $m\leq\exp\left(c^\sharp\sqrt{\log X}\right)$, so that $mq\leq\exp\left(2c^\sharp\sqrt{\log X}\right)$, then by the discussion in Section~\ref{notation}, $L(s,\chi)=0$ with primitive $\chi\pmod{mq}$ implies $mq=q_0$, and since $p_0\nmid q$, we have $p_0\mid m$.
\end{proof}
\begin{rem*}
If $f(X)\leq\exp\left(c\sqrt{\log X}\right)$ for some constant $c$, then choosing $c^\sharp$ to be such a constant, one sees that $\mathcal{Z}$ can be taken to be at most a singleton.
\end{rem*}
\begin{rem*}
We see that asymptotically almost all moduli remain after exceptions, because the excluded moduli number at most $\ll\frac{f(X)}{(\log\log X)}$, since $p_0\geq\log\log X$.
\end{rem*}

Using this proposition, we prove the following
\begin{thm}\label{BV1}
Let $A$ be a given positive number. There exists a positive number $B$ such that for all $M\leq f(X)$, except those that are multiples of numbers in a set of size $Z_f$, we have
\begin{equation}
\sum_{\substack{q\leq \frac{X^{1/2}}{M^{6/5}}(\log X)^{-B}\\(q,Mp_0)=1}}\max_{(a,qM)=1}\;\Big\lvert\psi(X;qM,a) - \frac{\psi(X)}{\varphi(qM)}\Big\rvert\ll\frac{X}{\varphi(M)}(\log X)^{-A},
\end{equation}
where the implicit constants depend on $A$.
\end{thm}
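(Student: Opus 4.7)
The plan is to adapt the classical zero-density proof of the Bombieri-Vinogradov theorem (as in Gallagher, Motohashi, or Iwaniec-Kowalski~\cite{IK2004}*{Ch.~17}) to the present setting, in which the modulus $M$ is held fixed and the modulus $q$ is averaged. The key ingredients are the orthogonality of Dirichlet characters modulo $qM$, the truncated explicit formula for $\psi(X,\chi)$, the zero-free region provided by Proposition~\ref{exception}, and the zero-density estimate of Theorem~\ref{densitythm}.

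I first use orthogonality to rewrite, for each $(a, qM) = 1$,
\begin{equation*}
\psi(X; qM, a) - \frac{\psi(X)}{\varphi(qM)} = \frac{1}{\varphi(qM)} \sum_{\substack{\chi \pmod{qM}\\ \chi \neq \chi_0}} \bar\chi(a)\, \psi(X, \chi) + O(\log qM),
\end{equation*}
and pass from each non-principal $\chi$ to its primitive inducing character. Since $(q, M) = 1$ throughout the sum, $\chi$ factors as $\psi \xi$ with $\psi \pmod q$ and $\xi \pmod M$, and its primitive inducing character factors correspondingly as $\chi^* = \psi^* \xi^*$ with $\psi^*$ primitive mod $q' \mid q$ and $\xi^*$ primitive mod $m \mid M$. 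This aligns the sum exactly with the shape handled by Theorem~\ref{densitythm}. I then apply the truncated explicit formula
\begin{equation*}
\psi(X, \chi^*) = -\sum_{\substack{\rho = \beta+i\gamma\\|\gamma|\leq T}} \frac{X^\rho}{\rho} + O\Bigl(\frac{X (\log q'm X)^2}{T} + \log X\Bigr)
\end{equation*}
with $T = \exp(c^\sharp\sqrt{\log X})$, so that the error is $O(X \exp(-c\sqrt{\log X}))$ and negligible after summation. By Proposition~\ref{exception}, provided $M \notin \mathcal{Z}$ and $(q, Mp_0) = 1$, every remaining zero satisfies $\beta < 1 - c^* \log\log X /\log X$, so that $X^\beta \leq X/(\log X)^{c^*}$; choosing $c^*$ large relative to $A$ yields the necessary pointwise saving per zero.

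It then remains to bound
\begin{equation*}
\sum_{\substack{q \leq Q\\(q, Mp_0) = 1}} \frac{1}{\varphi(qM)} \sum_{\chi \neq \chi_0 \pmod{qM}} \sum_{|\gamma|\leq T} \frac{X^\beta}{|\gamma|+1},
\end{equation*}
which, after reduction to primitive characters $\chi^* = \psi^*\xi^*$ of moduli $q'm$ ($q' \mid q$, $m \mid M$) and reversal of summation, becomes a weighted count of zeros across primitive $L$-functions. Splitting the region $\{0 < 1-\beta < 1/2,\ |\gamma| \leq T\}$ into $O((\log X)^2)$ dyadic rectangles $[1-2\delta,1-\delta]\times[T'/2, T']$ and applying Theorem~\ref{densitythm} in each, together with Abel summation to handle the weight $1/\varphi(q')$, produces a bound which, with the choice $Q = X^{1/2}/M^{6/5}(\log X)^{-B}$ and $B$ taken sufficiently large in terms of $A$ and $c^*$, is at most $X(\log X)^{-A}/\varphi(M)$. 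The factor $M^{6/5}$ emerges from the second term $(MQ^2T')^{c(\delta)\delta}$ in the density estimate together with the uniform bound $c(\delta) \leq 12/5$ on $(0, 1/2)$---the same ingredient that drives Proposition~\ref{exception}. The main obstacle will be exactly this bookkeeping: verifying that the dyadic sums in $\delta$ and $T'$ converge against the growth of $(MQ^2T')^{c(\delta)\delta}$ to yield the stated polylog saving, and checking that the moduli excluded at the outset ($M \in \mathcal{Z}$, or $q$ sharing a factor with $p_0$) contribute only acceptably.
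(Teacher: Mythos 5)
Your high-level plan matches the paper's proof: orthogonality, factorization $\chi=\psi\xi$, passage to primitive characters, truncated explicit formula, dyadic decomposition of the zero-counting region, and application of Theorem~\ref{densitythm}. But there is a genuine gap in the way you invoke Proposition~\ref{exception}. You assert that for $M\notin\mathcal{Z}$ and $(q,Mp_0)=1$, \emph{every} remaining zero with $|\gamma|\leq T$ satisfies $\beta<1-c^*\log\log X/\log X$, so that $X^\beta\leq X(\log X)^{-c^*}$ gives a pointwise saving. This is false: Proposition~\ref{exception} furnishes the zero-free region only for $q\leq\exp\bigl(c^\sharp\sqrt{\log X}\bigr)$ (together with $\widetilde\gamma\leq\exp\bigl(c^\sharp\sqrt{\log X}\bigr)$), whereas the $q$-summation extends all the way up to $\Omega=X^{1/2}M^{-6/5}(\log X)^{-B}$. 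For the vast bulk of the range there is no useful zero-free region at all, and indeed nothing prevents zeros with $1-\beta\asymp1/\log X$ from occurring there; the pointwise factor $X^{-(1-\beta)}$ is then essentially $1$ and gives no saving.

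The correct mechanism, and the one the paper uses, is a case split that you do not make explicit. For the bulk of the range one relies solely on the density estimate: when $\delta\leq 2/15$ one has $(MQ^2T)^{c(\delta)\delta}/(QT)\ll M^{2\delta}(QT)^{-1/2}$, and this exponential saving in $QT$ handles all dyadic boxes with $QT\geq\exp\bigl(c^\sharp\sqrt{\log X}\bigr)$; the ranges $\delta\geq 2/15$ are handled by the exponents in the density theorem alone with a suitable $B$. The zero-free region from Proposition~\ref{exception} is only needed on the small residual region where both $Q$ and $T$ (hence $q$ and $\widetilde\gamma$) are at most $\exp\bigl(c^\sharp\sqrt{\log X}\bigr)$ and $\delta\leq 2/15$, which is precisely where the proposition applies; there one uses $\delta\geq c^*\log\log X/\log X$ together with $M^2/X\ll X^{-1/6}$ to extract the $(\log X)^{-A}$ saving. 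You should replace the blanket zero-free-region claim with this tripartite analysis (large $\delta$; small $\delta$ and large $QT$; small $\delta$ and small $QT$), as otherwise the argument does not close. Your choice $T=\exp\bigl(c^\sharp\sqrt{\log X}\bigr)$ in the explicit formula is a harmless deviation from the paper's $T=X^{1/2}$ and does not affect this issue.
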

\begin{proof}
Let $c^*$ be a constant to be specified later in terms of $A$, and pick $c^\sharp$ arbitrarily (or, in case $f(X)\leq\exp\left(c\sqrt{\log X}\right)$ for some $c$, pick $c^\sharp$ according to the first remark following Proposition \ref{exception}), so that Proposition \ref{exception} furnishes us with a set $\mathcal{Z}$ of size $Z_f$. Then if $M$ is not a multiple of any number in $\mathcal{Z}$, $q\leq\exp\left(c^\sharp\sqrt{\log X}\right)$ and $(q,Mp_0)=1$ then no $L(s,\psi\xi)$ with $\psi$ primitive has a zero in the region $\beta\geq 1-\frac{c^*\log\log X}{\log X}$, $\widetilde{\gamma}\leq\exp\left(c^\sharp\sqrt{\log X}\right)$. We put $\Omega=X^{1/2}M^{-6/5}(\log X)^{-B}$ for the sake of brevity. We have
\begin{equation}
\psi(X;qM,a)=\frac{1}{\varphi(qM)}\sum_{\chi\pmod{qM}}\overline{\chi}(a)\psi(X,\chi)
\end{equation}
and
\begin{equation}
\lvert\psi(X,\chi_0)-\psi(X)\rvert\leq\sum_{\substack{n\leq X\\(n,qM)>1}}\Lambda(n)\ll(\log qM)(\log X),
\end{equation}
so it suffices to consider, within acceptable error,
\begin{equation}\label{BV1target}
\sum_{\substack{q\leq\Omega\\(q,Mp_0)=1}}\;\max_{(a,qM)=1}\;\Big\lvert\frac{1}{\varphi(qM)}\sum_{\substack{\chi\pmod{qM}\\\chi\neq\chi_0}}\overline{\chi}(a)\psi(X,\chi)\Big\rvert.
\end{equation}
Since $(M,q)=1$, we can factorize $\chi$ as $\psi\xi$, where $\psi$ and $\xi$ are characters to the moduli $q$ and $M$ respectively (there is no danger of confusing $\psi(n)$ with $\psi(X;q,a)$, nor with $\psi(X,\chi)$), so that \eqref{BV1target} is
\begin{equation}
\sum_{\substack{q\leq\Omega\\(q,Mp_0)=1}}\;\max_{(a,qM)=1}\;\Big\lvert\frac{1}{\varphi(qM)}\sum_{\substack{\psi\pmod{q}\\\xi\pmod{M}\\\psi\xi\neq\chi_0}}\overline{\psi\xi}(a)\psi(X,\psi\xi)\Big\rvert.
\end{equation}
We replace each character $\psi$ with the primitive character $\psi^*$ inducing it. This leads to an error of
\begin{equation}
\sum_{\substack{q\leq\Omega\\(q,Mp_0)=1}}\frac{1}{\varphi(qM)}\sum_{\substack{\psi\pmod{q}\\\xi\pmod{M}}}\sum_{\substack{n\leq X\\(n,q)>1}}\Lambda(n)\ll \frac{X^{1/2}}{M^{6/5}}\exp\left(-c^\sharp\sqrt{\log X}\right)(\log X)^2,
\end{equation}
and this is acceptable. Using the explicit formula for $\psi(X,\chi)$ in the form
\begin{equation}
\psi(X,\chi)=-\sum_{\substack{\lvert\gamma_\chi\rvert\leq X^{1/2}\\\beta_\chi>{1/2}}}\frac{X^{\rho_\chi}}{\rho_\chi}+O(X^{1/2}(\log X)^2),
\end{equation}
we are left to bound
\begin{equation}
\frac{1}{\varphi(M)}\sum_{\substack{q\leq\Omega\\(q,Mp_0)=1}}\frac{1}{\varphi(q)}\sum_{\xi\pmod{M}}\:\sum_{\psi\pmod{q}}\sum_{\substack{\lvert\gamma_{\psi^*\xi}\rvert\leq X^{1/2}\\\beta_{\psi^*\xi}>{1/2}}}\frac{X^{\beta_{\psi^*\xi}}}{\lvert\rho_{\psi^*\xi}\rvert}.
\end{equation}
We rearrange the sum according to the moduli of the primitive characters $\psi^*$ that occur, hence after relabelling the dummy variables so that $q$ is now the modulus of $\psi^*$, we have
\begin{equation}
\begin{split}
&\frac{X}{\varphi(M)}\:\sum_{\substack{q\leq\Omega\\(q,Mp_0)=1}}\sum_{\xi\pmod{M}}\:\sideset{}{^*}\sum_{\psi\pmod{q}}\sum_{\substack{\lvert\gamma_{\psi\xi}\rvert\leq X^{1/2}\\\beta_{\psi\xi}>{1/2}}}\frac{X^{-(1-\beta_{\psi\xi})}}{\lvert\rho_{\psi\xi}\rvert}\sum_{\substack{k\leq\Omega/q\\(k,Mp_0)=1}}\frac{1}{\varphi(kq)}\\
\ll&\frac{X(\log X)^2}{\varphi(M)}\sum_{\substack{q\leq\Omega\\(q,Mp_0)=1}}\sum_{\xi\pmod{M}}\:\sideset{}{^*}\sum_{\psi\pmod{q}}\sum_{\substack{\lvert\gamma_{\psi\xi}\rvert\leq X^{1/2}\\\beta_{\psi\xi}>{1/2}}}\frac{X^{-(1-\beta_{\psi\xi})}}{q\lvert\rho_{\psi\xi}\rvert}.
\end{split}
\end{equation}
We divide the ranges for $q$ and $\widetilde{\gamma}$ into dyadic segments, and the range for $\beta$ into segments of length $(\log X)^{-1}$ as follows.
\begin{equation}
q\in[Q_\mu/2,Q_\mu), \qquad \widetilde{\gamma}\in[T_\nu/2,T_\nu), \qquad 1-\beta\in[\delta_\lambda-(\log X)^{-1},\delta_\lambda),
\end{equation}
where $2\leq Q_\mu=2^\mu<2\Omega$, $2\leq T_\nu=2^\nu<2X^{1/2}$ and $(\log X)^{-1}\leq\delta_\lambda=\lambda(\log X)^{-1}\leq1/2$. So our expression is
\begin{equation}
\ll\frac{X(\log X)^5}{\varphi(M)}\sup_{(\lambda,\mu,\nu)}\frac{N^*(1-\delta_\lambda,M,Q_\mu,T_\nu)}{Q_\mu T_\nu}X^{-\delta_\lambda},
\end{equation}
where
\begin{equation}
N^*(1-\delta_\lambda,M,Q_\mu,T_\nu)=\sum_{\substack{Q_\mu/2<q\leq Q_\mu\\(q,Mp_0)=1}}\:\sideset{}{^*}\sum_{\psi\pmod{q}}\sum_{\xi\pmod{M}}N(1-\delta_\lambda,T_\nu,\psi\xi).
\end{equation}
Thus we need to show, for all triples $(\lambda,\mu,\nu)$, dropping the subscripts for economy of notation, the upper bound
\begin{equation}
N^*(1-\delta,M,Q,T)\ll QTX^\delta(\log X)^{-A-5}.
\end{equation}
To this end we use the Theorem \ref{densitythm}, which for our ranges of $Q$ and $T$ yields,
\begin{equation}\label{density}
N^*(1-\delta,M,R,T)\ll\left((MQT)^{2\delta}+(MQ^2T)^{c(\delta)\delta}\right)(\log X)^{C^\prime},
\end{equation}
where $C^\prime$ is an absolute constant.

Since for $0\leq\delta\leq1/2$, we have
\begin{equation}
\frac{(MQT)^{2\delta}}{QT}(\log X)^{C^\prime}\ll M^{2\delta}(\log X)^{C^\prime},
\end{equation}
the contribution of the first term on the right hand side of (\ref{density}) is acceptable if $\delta\geq\frac{2}{15}$, say. So we only need to show
\begin{equation}\label{BV1goal}
\frac{(MQ^2T)^{c(\delta)\delta}}{QT}\ll X^\delta(\log X)^{-(A+C^\prime+5)}
\end{equation}
for $0\leq\delta\leq1/2$, and
\begin{equation}\label{BV1goal2}
\frac{(MQT)^{2\delta}}{QT}\ll X^\delta(\log X)^{-(A+C^\prime+5)}
\end{equation}
for $0\leq\delta\leq\frac{2}{15}$.

If $\frac{1}{4}\leq\delta\leq\frac{1}{2}$, we have $c(\delta)=\frac{3}{1+\delta}$. Here $6\delta/(1+\delta)-1\leq2\delta$, $3\delta/(1+\delta)-1\leq0$ and $3\delta/(1+\delta)\leq\frac{12}{5}\delta$, so
\begin{equation}\label{BV1range1}
\begin{split}
\frac{(MQ^2T)^{\frac{3\delta}{1+\delta}}}{QT}&\ll\frac{(MQ^2)^\frac{3\delta}{1+\delta}}{Q}\\
&\ll M^{\frac{12}{5}\delta}\left(\frac{X^{1/2}}{M^\frac{6}{5}(\log X)^B}\right)^{2\delta}\\
&\ll X^\delta(\log X)^{-2\delta B}\\
&\ll X^\delta(\log X)^{-(A+C^\prime+5)},
\end{split}
\end{equation}
if $B\geq 2(A+C^\prime+5)$.

If $\frac{2}{15}\leq\delta\leq\frac{1}{4}$, we have $c(\delta)=3/(2-3\delta)$. Here also $3\delta/(2-3\delta)\leq\frac{12}{5}\delta$, $0\leq6\delta/(2-3\delta)-1\leq\frac{4}{5}\delta$ and $3\delta/(2-3\delta)-1\leq0$, so
\begin{equation}\label{BV1range2}
\begin{split}
\frac{(MQ^2T)^{\frac{3\delta}{2-3\delta}}}{QT}&\ll\frac{(MQ^2)^\frac{3\delta}{2-3\delta}}{Q}\\
&\ll M^{\frac{12}{5}\delta}\left(\frac{X^{1/2}}{M^\frac{6}{5}(\log X)^B}\right)^{\frac{4}{5}\delta}\\
&\ll M^{\frac{36}{25}\delta}X^{\frac{2}{5}\delta}(\log X)^{-\frac{4}{5}\delta B},
\end{split}
\end{equation}
and this is $\ll X^\delta(\log X)^{-(A+C^\prime+5)}$ if $M\ll X^{5/12}$ and $B\geq\frac{75}{8}(A+C^\prime+5)$.

Now suppose $\delta\leq\tfrac{2}{15}$. Then $6\delta/(2-3\delta)-1\leq-\tfrac{1}{2}$ and $3\delta/(2-3\delta)\leq 2\delta$, so
\begin{equation}\label{deltagoal}
\frac{(MQ^2T)^{\frac{3\delta}{2-3\delta}}}{QT}\ll M^{2\delta}(QT)^{-1/2},
\end{equation}
as well as
\begin{equation}
\frac{(MQT)^{2\delta}}{QT}\ll M^{2\delta}(QT)^{-1/2}.
\end{equation}

Now if $M\ll X^{\frac{5}{12}}$ and $QT\geq\exp\left(c^\sharp\sqrt{\log X}\right)$, the right hand side is
\begin{equation}
\leq M^{2\delta}\exp\left(-\tfrac{c^\sharp}{2}\sqrt{\log X}\right)\ll X^\delta(\log X)^{-(A+C^\prime+5)}.
\end{equation}
Otherwise, if $QT\leq\exp\left(c^\sharp\sqrt{\log X}\right)$, we use the fact that $\delta\geq\frac{c^*\log\log X}{\log X}$ by our assumption on $M$, and we have
\begin{equation}
\begin{split}
\leq\left(\frac{M^2}{X}\right)^\delta&\leq\exp\left(-\frac{c^*}{5}\log\log X\right)\\
&\leq(\log X)^{-(A+C^\prime+5)},
\end{split}
\end{equation}
provided $c^*\geq 5(A+C^\prime+5)$.
\end{proof}
\begin{rem*}
Note that when $M$ indeed reaches $X^{5/12}$, the sum is vacuous and the theorem is trivial. We will apply it with $M\ll X^{\frac{5}{12}-\frac{5}{6}\vartheta}$ for some positive $\vartheta$ to get ``level of distribution'' $\vartheta$.
\end{rem*}

For the shorter range $M\leq(\log X)^C$, we can simply use the classical Bombieri-Vinogradov theorem (see, for instance, \cite{D2000}*{\S 28}) with $A+C$ in place of $A$, and gain a factor of $\phi(M)$ without any further modifications.
\begin{thm}\label{BV3}
Let $A$ be a given positive number and let $M\ll(\log X)^C$ be an integer. Then there is a positive number $B$ such that
\begin{equation}
\sum_{\substack{q\leq X^{1/2}(\log X)^{-B}\\(q,M)=1}}\;\max_{(a,qM)=1}\;\Big\lvert\psi(X;qM,a) - \frac{\psi(X)}{\varphi(qM)}\Big\rvert\ll\frac{X}{\varphi(M)}(\log X)^{-A},
\end{equation}
where the implicit constant depends on $A$ and $C$.
\end{thm}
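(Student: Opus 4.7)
The plan is to derive the theorem directly from the classical Bombieri--Vinogradov theorem by treating $M$ as essentially constant and absorbing it into the error exponent. Recall the classical version (Davenport, \S 28): for any $A'>0$ there exists $B'>0$ such that
\begin{equation*}
\sum_{Q\leq X^{1/2}(\log X)^{-B'}}\;\max_{(a,Q)=1}\;\Big\lvert\psi(X;Q,a)-\frac{\psi(X)}{\varphi(Q)}\Big\rvert\ll X(\log X)^{-A'}.
\end{equation*}
First I would apply this with $A'=A+C$, which produces a value $B'=B'(A,C)$ and a total bound of $X(\log X)^{-A-C}$. Then I would choose $B$ a fixed additive constant larger than $B'$ (any $B\geq B'+C$ works, since $M\ll(\log X)^C$ implies $qM\leq X^{1/2}(\log X)^{C-B}\leq X^{1/2}(\log X)^{-B'}$ for all $q$ in our range).

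With this choice, the moduli $Q=qM$ appearing in the left-hand side of the theorem form a subset of those handled by the classical estimate, and by the coprimality condition $(q,M)=1$ each admissible $Q$ corresponds to a unique $q$. Therefore the sum in the statement is majorised term by term by the classical Bombieri--Vinogradov sum, yielding
\begin{equation*}
\sum_{\substack{q\leq X^{1/2}(\log X)^{-B}\\(q,M)=1}}\;\max_{(a,qM)=1}\;\Big\lvert\psi(X;qM,a)-\frac{\psi(X)}{\varphi(qM)}\Big\rvert\ll X(\log X)^{-A-C}.
\end{equation*}

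Finally, since $\varphi(M)\leq M\ll(\log X)^{C}$, the factor $(\log X)^{-C}$ may be exchanged for $1/\varphi(M)$ (up to the implied constant), producing the desired bound $\ll X\varphi(M)^{-1}(\log X)^{-A}$. There is no real obstacle here: the only thing to verify is that the inflation of the modulus by $M$ merely shifts $B$ by a constant and inflates $A$ to $A+C$, both absorbed by working with the version of the classical theorem with suitable constants. No zero-density input beyond what underlies the classical theorem is needed, in contrast to Theorem~\ref{BV1}, because the range of $M$ is so small that the loss $M^{2\delta}$ that appeared in the argument there is here only a harmless power of $\log X$.
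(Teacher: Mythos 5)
Your proof is correct and takes essentially the same approach as the paper, which gives only a one-sentence justification citing the classical Bombieri--Vinogradov theorem with $A+C$ in place of $A$ and ``gaining a factor of $\varphi(M)$.'' Your explicit unpacking---majorizing by a subsum of the classical estimate after shifting $B$ by roughly $C$, then trading the resulting $(\log X)^{-C}$ for $1/\varphi(M)$ via $\varphi(M)\leq M\ll(\log X)^C$---is precisely the intended reading.
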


We would like to express these results in a unified fashion. To that end, given an increasing function $f(X)$ of $X$ such that $f(X)\ll X^{\frac{5}{12}-\frac{5}{6}\vartheta}$ with $\vartheta>0$, we introduce the following notation.
\begin{equation}
e_f=\begin{cases}
\frac{1}{2},\quad&\text{if $f(X)\leq\exp\left(C\sqrt{\log X}\right)$,}\\
\vartheta,\quad&\text{otherwise}.
\end{cases}
\end{equation}
With this we have
\begin{thm}\label{UnifiedBV}
Let $A$ be given positive numbers and $f(X)$ an increasing function of $X$ satisfying $f(X)\ll X^C$ with $C<5/12$. Then for all $M\leq f(X)$, except multiples of numbers in a set of size at most $Z_f$, and all $\delta>0$, we have
\begin{equation}
\sum_{\substack{q\leq X^{e_f-\delta}\\(q,MP_f)=1}}\;\max_{(a,qM)=1}\;\Big\lvert\psi(X;qM,a) - \frac{\psi(X)}{\varphi(qM)}\Big\rvert\ll\frac{X}{\varphi(M)}(\log X)^{-A}.
\end{equation}
\end{thm}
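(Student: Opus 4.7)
The plan is to deduce this theorem by direct case analysis, appealing to the prior Bombieri--Vinogradov-type statements (Theorems \ref{BV1} and \ref{BV3}) according to the size of $f(X)$. The definitions of $e_f$, $P_f$ and $Z_f$ have been calibrated precisely so that each of the regimes $f(X)\ll(\log X)^C$, $f(X)\ll\exp(c\sqrt{\log X})$, and $f(X)\ll X^{5/12-5\vartheta/6}$ corresponds to the hypotheses of exactly one of these earlier theorems. No new analytic input is needed; the work consists of verifying that in each regime the claimed range $q\le X^{e_f-\delta}$ fits inside the range available from the corresponding theorem.

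First I would dispose of the regime $f(X)\ll(\log X)^C$, where by definition $e_f=1/2$, $P_f=1$, and $Z_f=0$. Since then $M\ll(\log X)^C$, one simply invokes Theorem \ref{BV3} with $A+C$ (for some suitable $C$) in place of $A$, noting that the coprimality condition $(q,M)=1$ there matches $(q,MP_f)=1$ because $P_f=1$, and that the range $q\le X^{1/2}(\log X)^{-B}$ contains $q\le X^{1/2-\delta}=X^{e_f-\delta}$ once $X$ is large enough.

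In the remaining regimes $P_f=p_0$ and the relevant tool is Theorem \ref{BV1}, which provides the required estimate over $q\le X^{1/2}M^{-6/5}(\log X)^{-B}$ with $(q,Mp_0)=1$ outside an exceptional set of size $Z_f$, so the task reduces to checking $X^{1/2}M^{-6/5}(\log X)^{-B}\ge X^{e_f-\delta}$. When $f(X)\ll\exp(c\sqrt{\log X})$ we have $e_f=1/2$ and $M^{6/5}\ll\exp(6c\sqrt{\log X}/5)$, which is of smaller order than $X^\delta$, so the allowed range is amply sufficient. When instead $f(X)\ll X^{5/12-5\vartheta/6}$ we have $e_f=\vartheta$ and $M^{6/5}\ll X^{1/2-\vartheta}$, giving $X^{1/2}M^{-6/5}\gg X^\vartheta$, again dominating $X^{\vartheta-\delta}(\log X)^B$ for $X$ large.

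The main ``obstacle'' is therefore purely organizational: one must be sure that the exceptional set $\mathcal{Z}$ furnished by Proposition \ref{exception}, the role of $p_0$ inside $P_f$, and the size condition $f(X)\ll X^C$ with $C<5/12$ all interlock consistently across the three regimes, so that a single clean statement with the unified range $X^{e_f-\delta}$ emerges. Since this bookkeeping has already been performed inside the statements and proofs of Theorems \ref{BV1} and \ref{BV3} (and the remark noting triviality of Theorem \ref{BV1} when $M$ reaches $X^{5/12}$), the unified theorem follows essentially immediately upon assembling the three cases.
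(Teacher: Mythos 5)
Your proposal is correct and takes essentially the same route as the paper, which presents Theorem \ref{UnifiedBV} without a separate proof as an immediate consequence of Theorems \ref{BV1} and \ref{BV3}; your explicit casework on the three regimes of $f(X)$, together with the exponent checks that $X^{1/2}M^{-6/5}(\log X)^{-B}$ dominates $X^{e_f-\delta}$ in each case, is exactly the bookkeeping the paper leaves implicit. (One tiny simplification: in the first regime you do not need to adjust $A$ to $A+C$, since Theorem \ref{BV3} already states the bound in the required form $\frac{X}{\varphi(M)}(\log X)^{-A}$.)
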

Now we are in a position to prove our main proposition.

\section{Proof of Proposition \ref{ASYMPSUMS}}
This section consists of lemmata that establish Proposition \ref{ASYMPSUMS}. They follow the corresponding results in \cite{Ma2015} \emph{mutatis mutandis}. In \cite{Ma2015}, the parameter $W$ features in a dual role: first in that the weights $\lambda_{d_1,\ldots,d_k}$ are supported for $(\prod d_i,W)=1$, and second in the ``$W$-trick'', i.e.\! in restricting $n$ to $n\equiv\nu_0\pmod{W}$. In our case we have $VP_f$ in the first role and $W^\prime$ in the second.
\begin{lem}\label{S11}
Let
\begin{equation}
y_{r_1,\ldots,r_k}=\left(\prod_{i=1}^k\mu(r_i)\varphi(r_i)\right)\sum_{\substack{d_1,\ldots,d_k\\r_i\mid d_i\forall i}}\frac{\lambda_{d_1,\ldots,d_k}}{\prod_{i=1}^kd_i},
\end{equation}
and let $y_{max}=\sup_{r_1,\ldots,r_k}\lvert y_{r_1,\ldots,r_k}\rvert$. Then we have
\begin{equation}
S_1=\frac{X}{V}\sum_{u_1,\ldots,u_k}\frac{y_{u_1,\ldots,u_k}^2}{\prod_{i=1}^k\varphi(u_i)}+O\left(\frac{y_{max}^2\varphi(VP_f)^kX(\log X)^k}{V(VP_f)^kD_0}\right).
\end{equation}
\end{lem}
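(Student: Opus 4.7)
The plan is to follow the standard Maynard--Tao computation of the Selberg-type main sum, with $W'$ playing the role of the $W$-trick modulus and $VP_f$ playing the role of the support modulus. Expanding the square gives
\[
S_1 = \sum_{d_1,\ldots,d_k}\sum_{e_1,\ldots,e_k}\lambda_{d_1,\ldots,d_k}\lambda_{e_1,\ldots,e_k}\sum_{\substack{x\leq n<2x\\ n\equiv\nu_0\pmod{W'}\\ [d_i,e_i]\mid nM+h_i\ \forall i}}1.
\]
I would first verify the compatibility of the congruences: the support condition $(\prod d_i,VP_f)=1$ (and similarly for $e$), together with $\mu(\prod d_i)^2=1$ and $\mu(\prod e_i)^2=1$, forces the moduli $[d_i,e_i]$ to be pairwise coprime and coprime to $W'$ (since $W'\mid V\mid VP_f$). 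This is the setup in which CRT applies cleanly to combine all the congruences modulo $W'\prod_i[d_i,e_i]$, and $nM+h_i\equiv 0\pmod{[d_i,e_i]}$ is solvable because $(M,[d_i,e_i])=1$ (as $M\mid V$).

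By CRT the inner count is $\dfrac{x}{W'\prod_i[d_i,e_i]}+O(1)$, so using $x/W'=X/V$ the main term is
\[
\frac{X}{V}\sum_{d,e}\frac{\lambda_{d_1,\ldots,d_k}\lambda_{e_1,\ldots,e_k}}{\prod_i[d_i,e_i]}.
\]
Next I would diagonalize this sum via the $y$-substitution. Writing $\frac{1}{[d_i,e_i]}=\frac{(d_i,e_i)}{d_ie_i}$ and using the multiplicative identity $\frac{n}{\varphi(n)}=\sum_{r\mid n}\frac{\mu(r)^2}{\varphi(r)}$ at each prime power, one obtains, after interchanging the order of summation and using $\mu(d_i)d_i\sum_{r_i\mid d_i}\mu(r_i)^2/\varphi(r_i)$ type manipulations, the clean identity
\[
\sum_{d,e}\frac{\lambda_{d_1,\ldots,d_k}\lambda_{e_1,\ldots,e_k}}{\prod_i[d_i,e_i]}=\sum_{u_1,\ldots,u_k}\frac{y_{u_1,\ldots,u_k}^2}{\prod_i\varphi(u_i)},
\]
exactly as in Lemma 5.1 of \cite{Ma2015}, with the only change being that the support of $y$ now enforces $(\prod u_i,VP_f)=1$ rather than $(\prod u_i,W)=1$.

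The error from the $O(1)$ terms is $\sum_{d,e}|\lambda_{d_1,\ldots,d_k}\lambda_{e_1,\ldots,e_k}|$. Here I would invert the relation between $y$ and $\lambda$ (as Maynard does) to get
\[
|\lambda_{d_1,\ldots,d_k}|\ll y_{max}\prod_{i=1}^k\Bigl(\sum_{\substack{d_i\mid r_i\\ (r_i,VP_f)=1\\ r_i<R}}\frac{\mu(r_i)^2}{\varphi(r_i)}\Bigr),
\]
and then bound the resulting double sum. Squaring, summing over $d$ and $e$ with $\prod d_i\prod e_i<R^2$ and each $d_i,e_i$ coprime to $VP_f$ and squarefree, the standard Mertens-type estimate over primes $p\nmid VP_f$ gives $\prod_{p\nmid VP_f,p<R}(1+O(1/p))^{2k}\ll \bigl(\varphi(VP_f)/VP_f\bigr)^{-k}(\log R)^{2k}$, which together with the $D_0^{-1}$ saving (coming from restricting to $p\leq D_0$ inside $VP_f$, exactly as in \cite{Ma2015}) yields the stated error $O\bigl(y_{max}^2\varphi(VP_f)^kX(\log X)^k/(V(VP_f)^kD_0)\bigr)$.

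The main obstacle is the bookkeeping in the error term: one has to squeeze out the factor $D_0^{-1}$, which comes from the observation that the contribution of diagonal terms (where some prime $p\leq D_0$ divides both $d_i$ and the corresponding $r_i$ in a nontrivial way) is already eliminated by the support condition on $\lambda$, so the off-diagonal contribution carries an extra $\prod_{p\leq D_0}(1-1/p)\asymp 1/\log D_0\ll 1/D_0$ saving relative to the main term — this is the only place where the modified setup $VP_f,W'$ instead of $W$ needs to be tracked, and the calculation goes through verbatim because $W'$ still contains all primes $\leq D_0$ not dividing $P_fM$, and the primes dividing $P_fM$ are already excluded from the support of $\lambda$.
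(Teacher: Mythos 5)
Your claim that the support conditions ``force the moduli $[d_i,e_i]$ to be pairwise coprime'' is false, and this is where the real content of the lemma (and the main place where the arithmetic-progression setup actually enters) is hiding. The conditions $\mu(\prod d_i)^2=1$ and $\mu(\prod e_i)^2=1$ give $(d_i,d_j)=1$ and $(e_i,e_j)=1$ separately, but nothing in the support of $\lambda$ rules out a prime $p$ dividing $d_i$ and $e_j$ with $i\neq j$, which makes $p\mid[d_i,e_i]$ and $p\mid[d_j,e_j]$. The CRT therefore does not apply ``cleanly''; rather, one must argue that the inner sum over $n$ is \emph{empty} in this case. The argument is: such a $p$ would divide both $nM+h_i$ and $nM+h_j$, hence $p\mid h_i-h_j$. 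But since $h_i\equiv h_j\pmod M$ and $\operatorname{diam}(\mathcal{H})<D_0M$, we have $h_i-h_j=fM$ with $|f|<D_0$; and $p\nmid M$ and $p>D_0$ because $(\prod d_i,VP_f)=1$ and every prime $\leq D_0$ together with every prime dividing $M$ divides $VP_f=W'MP_f$. Thus $p\nmid fM$, a contradiction. This is precisely the step that requires the hypotheses on $\mathcal{H}$ (all entries in the same class mod $M$, diameter $<D_0M$), so omitting it leaves the proof without the adaptation the lemma is designed to encapsulate.

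Two smaller issues. First, the ``clean identity'' $\sum_{d,e}\lambda_d\lambda_e/\prod[d_i,e_i]=\sum_u y_u^2/\prod\varphi(u_i)$ is not an exact identity: even after the main term is isolated, one has to drop the constraints $(d_i,e_j)=1$ for $i\neq j$ via M\"obius factors $\sum_{s_{ij}\mid d_i,e_j}\mu(s_{ij})$, and the $s_{ij}>1$ terms contribute an error; this should be acknowledged. Second, your explanation of the $D_0^{-1}$ saving is backwards: it does not come from a Mertens product $\prod_{p\leq D_0}(1-1/p)\asymp 1/\log D_0$ (and in any case $1/\log D_0\gg 1/D_0$, not $\ll$), but from the tail estimate $\sum_{s>D_0}\mu(s)^2/\varphi(s)^2\ll 1/D_0$ applied to the off-diagonal M\"obius variables $s_{ij}$, using that the support of $y$ forces $s_{ij}=1$ or $s_{ij}>D_0$.
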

\begin{proof}
We start by rearranging the sum on the right hand side of (\ref{S1}) to obtain
\begin{equation}
S_1=\sum_{\substack{d_1,\ldots,d_k\\e_1,\ldots,e_k}}\lambda_{d_1,\ldots,d_k}\lambda_{e_1,\ldots,e_k}\sum_{\substack{x\leq n<2x\\n\equiv \nu_0\pmod{W^\prime}\\ [d_i,e_i]\mid nM+h_i}}1.
\end{equation}
Now when $W^\prime,[d_1,e_1],\ldots [d_k,e_k]$ are pairwise coprime, the inner sum is over a single residue class modulo $q=W^\prime\prod_i[d_i,e_i]$ by the Chinese Remainder Theorem, otherwise it is empty, in the case $p\mid(W^\prime,[d_i,e_i])$ because of the condition $(W^\prime,M\nu_0+h_i)=1$, and in the case $p\mid ([d_i,e_i],[d_j,e_j])$ because it would imply $p\mid h_i-h_j$, but $h_i-h_j=fM$ for some $f<D_0$ since $h_i$ and $h_j$ lie in the same residue class modulo $M$, but $p\nmid M$ and $p$ can't be a prime less than $D_0$ by the support of $\lambda$. Since $f<D_0$ by the diameter of $\mathcal{H}$, we deduce that there's no contribution when $([d_i,e_i],[d_j,e_j])>1$.

Thus the inner sum is $x/q+O(1)$, and we have
\begin{equation}
S_1=\frac{X}{V}\sideset{}{^\prime}\sum_{\substack{d_1,\ldots,d_k\\e_1,\ldots,e_k}}\frac{\lambda_{d_1,\ldots,d_k}\lambda_{e_1,\ldots,e_k}}{\prod_{i=1}^k[d_i,e_i]}+O\Bigl(\sideset{}{^\prime}\sum_{\substack{d_1,\ldots,d_k\\e_1,\ldots,e_k}}\lvert\lambda_{d_1,\ldots,d_k}\lambda_{e_1,\ldots,e_k}\rvert\Bigr),
\end{equation}
where $\textstyle\sideset{}{^\prime}\sum$ denotes the coprimality restrictions. The error term is plainly
\begin{equation}\label{err}
\ll\lambda^2_{max}\left(\sum_{d<R}\tau_k(d)\right)^2\ll\lambda^2_{max}R^2(\log X)^{2k},
\end{equation}
where $\lambda_{max}=\sup_{d_1,\ldots,d_k}\lambda_{d_1,\ldots,d_k}$. To deal with the main term, we use the identity
\begin{equation}
\frac{1}{[d_i,e_i]}=\frac{1}{d_ie_i}\sum_{u_i\mid d_i,e_i}\varphi(u_i)
\end{equation}
and rewrite it as
\begin{equation}
\frac{X}{V}\sum_{u_1,\ldots,u_k}\left(\prod_{i=1}^k\varphi(u_i)\right)\sideset{}{^\prime}\sum_{\substack{d_1,\ldots,d_k\\e_1,\ldots,e_k\\u_i\mid d_i,e_i\forall i}}\frac{\lambda_{d_1,\ldots,d_k}\lambda_{e_1,\ldots,e_k}}{\prod_{i=1}^kd_ie_i}.
\end{equation}
By the support of $\lambda$, we may drop the requirement that $W^\prime$ is coprime to $[d_i,e_i]$. Also by the support of $\lambda$, terms with $(d_i,d_j)>1$ with $i\neq j$ have no contribution. Thus our restrictions boil down to $(d_i,e_j)=1$ for $i\neq j$. We may remove this requirement by multiplying our expression with $\sum_{s_{i,j}\mid d_i,e_j}\mu(s_{i,j})$ for all $i,j$. Then our main term becomes
\begin{equation}
\frac{X}{V}\sum_{u_1,\ldots,u_k}\left(\prod_{i=1}^k\varphi(u_i)\right)\sum_{s_{1,2},\ldots,s_{k-1,k}}\biggl(\prod_{\substack{1\leq i,j\leq k\\i\neq j}}\mu(s_{i,j})\biggr)\sum_{\substack{d_1,\ldots,d_k\\e_1,\ldots,e_k\\u_i\mid d_i,e_i\forall i\\s_{i,j}\mid d_i,e_j\forall i\neq j}}\frac{\lambda_{d_1,\ldots,d_k}\lambda_{e_1,\ldots,e_k}}{\prod_{i=1}^kd_ie_i}.
\end{equation}
We may restrict $s_{i,j}$ to be coprime to $u_i$, $u_j$, $s_{i,a}$ and $s_{b,j}$ for all $a\neq i$ and $b\neq j$ since these have no contribution by the support of $\lambda$. We denote the summation with these restrictions by $\sum^*$. We introduce the change of variable
\begin{equation}
y_{r_1,\ldots,r_k}=\biggl(\prod_{i=1}^k\mu(r_i)\varphi(r_i)\biggr)\sum_{\substack{d_1,\ldots,d_k\\r_i\mid d_i\forall i}}\frac{\lambda_{d_1,\ldots,d_k}}{\prod_{i=1}^kd_i}.
\end{equation}
Thus $y_{r_1,\ldots,r_k}$ is supported on $r=\prod_ir_i<R$, $(r,VP_f)=1$ and $\mu(r)^2=1$. This change is invertible and we have
\begin{equation}\label{iden}
\sum_{\substack{r_1,\ldots,r_k\\d_i\mid r_i\forall i}}\frac{y_{r_1,\ldots,r_k}}{\prod_{i=1}^k\varphi(r_i)}=\frac{\lambda_{d_1,\ldots,d_k}}{\prod_{i=1}^k\mu(d_i)d_i}.
\end{equation}
Hence any choice of $y_{r_1,\ldots,r_k}$ with the above mentioned support will yield a choice of $\lambda_{d_1,\ldots,d_k}$. We note here that Maynard's estimate of $\lambda_{max}$ in terms of $y_{max}=\sup_{r_1,\ldots,r_k}y_{r_1,\ldots,r_k}$ holds verbatim and we have
\begin{equation}
\lambda_{max}\ll y_{max}(\log X)^k.
\end{equation}
So our error term (\ref{err}) is $O(y_{max}^2R^2(\log X)^{4k})$. Using our change of variables we obtain
\begin{equation}
\begin{split}
S_1&=\frac{X}{V}\sum_{u_1,\ldots,u_k}\left(\prod_{i=1}^k\varphi(u_i)\right)\sideset{}{^*}\sum_{s_{1,2},\ldots,s_{k-1,k}}\biggl(\prod_{\substack{1\leq i,j\leq k\\i\neq j}}\mu(s_{i,j})\biggr)\\
&\times\biggl(\prod_{i=1}^k\frac{\mu(a_i)\mu(b_i)}{\varphi(a_i)\varphi(b_i)}\biggr)y_{a_1,\ldots,a_k}y_{b_1,\ldots,b_k}+O\left(y_{max}^2R^2(\log X)^{4k}\right),
\end{split}
\end{equation}
where $a_j=u_j\prod_{i\neq j}s_{j,i}$ and $b_j=u_j\prod_{i\neq j}s_{i,j}$. Since there's no contribution when $a_j$ or $b_j$ are not squarefree, we may rewrite $\mu(a_j)$ as $\mu(u_j)\prod_{i\neq j}\mu(s_{j,i})$, and similarly for $\varphi(a_j)$, $\mu(b_j)$ and $\varphi(b_j)$. This gives us
\begin{equation}
\begin{split}
S_1&=\frac{X}{V}\sum_{u_1,\ldots,u_k}\left(\prod_{i=1}^k\frac{\mu(u_i)^2}{\varphi(u_i)}\right)\sideset{}{^*}\sum_{s_{1,2},\ldots,s_{k,k-1}}\biggl(\prod_{\substack{1\leq i,j\leq k\\i\neq j}}\frac{\mu(s_{i,j})}{\varphi(s_{i,j})^2}\biggr)y_{a_1,\ldots,a_k}y_{b_1,\ldots,b_k}\\
&+O\left(y_{max}^2R^2(\log X)^{4k}\right).
\end{split}
\end{equation}
There is no contribution from $s_{i,j}$ with $1<s_{i,j}<D_0$ because of the restricted support of $y$. The contribution when $s_{i,j}>D_0$ is
\begin{equation}
\begin{split}
&\ll\frac{y_{max}^2X}{V}\biggl(\sum_{\substack{u<R\\(u,VP_f)=1}}\frac{\mu(u_i)^2}{\varphi(u_i)}\biggr)^k\biggl(\sum_{s_{i,j}>D_0}\frac{\mu(s_{i,j})^2}{\varphi(s_{i,j})^2}\biggr)\biggl(\sum_{s>1}\frac{\mu(s)^2}{\varphi(s)^2}\biggr)^{k^2-k-1}\\
&\ll\frac{y_{max}^2\varphi(VP_f)^kX(\log X)^k}{V(VP_f)^kD_0}.
\end{split}
\end{equation}
Our previous error of $y_{max}^2R^2(\log X)^{4k}$ can be absorbed into this error, and the terms with $s_{i,j}=1$ give us our desired main term.
\end{proof}
\begin{lem}\label{S21}
Let $S_2^{(m)}$ be as defined in \textup{(\ref{S2})}, and let
\begin{equation}
y^{(m)}_{r_1,\ldots,r_k}=\Bigl(\prod_{i=1}^k\mu(r_i)g(r_i)\Bigr)\sum_{\substack{d_1,\ldots,d_k\\r_i\mid d_i\forall i\\d_m=1}}\frac{\lambda_{d_1,\ldots,d_k}}{\prod_i\varphi(d_i)},
\end{equation}
where $g$ is the totally multiplicative function defined on primes by $g(p)=p-2$. Let $y^{(m)}_{max}=\sup_{r_1,\ldots,r_k}\lvert y^{(m)}_{r_1,\ldots,r_k}\rvert$. Then for any fixed $A>0$, we have
\begin{equation}
\begin{split}
S^{(m)}_2=\frac{X}{\varphi(V)\log X}\sum_{u_1,\ldots,u_k}\frac{(y^{(m)}_{u_1,\ldots,u_k})^2}{\prod_{i=1}^kg(u_i)}&+O\left(\frac{(y^{(m)}_{max})^2\varphi(VP_f)^{k-1}X(\log X)^{k-2}}{\varphi(V)(VP_f)^{k-1}D_0}\right)\\
&+O\left(\frac{y_{max}^2X}{\varphi(M)(\log X)^A}\right).
\end{split}
\end{equation}
\end{lem}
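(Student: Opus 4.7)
The plan is to follow the corresponding calculation in \cite{Ma2015}*{Proposition 4.2}, modified to the arithmetic-progression setup in the same way that Lemma \ref{S11} above adapts Maynard's $S_1$ argument, with the Bombieri--Vinogradov input supplied by Theorem \ref{UnifiedBV} in place of the classical theorem. First I would expand the square in $S_2^{(m)}$ to obtain
\[ S_2^{(m)} = \sum_{d,e} \lambda_{d_1,\ldots,d_k}\lambda_{e_1,\ldots,e_k} \sum_{\substack{x\leq n<2x\\ n\equiv\nu_0\pmod{W^\prime}\\ [d_i,e_i]\mid nM+h_i\,\forall i}} \chi_\mathbb{P}(nM+h_m). \]
Since $nM+h_m$ is a prime of size $\asymp X$ while $[d_m,e_m]<R\ll X^{1/2}$, the condition $[d_m,e_m]\mid nM+h_m$ forces $d_m=e_m=1$ up to a negligible contribution. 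The coprimality/CRT argument used in Lemma \ref{S11} then shows the inner sum is empty unless $W^\prime$ and the $[d_i,e_i]$ for $i\neq m$ are pairwise coprime, in which case it counts primes in $[X,2X]$ lying in a single residue class modulo $qM$, where $q=W^\prime\prod_{i\neq m}[d_i,e_i]$. The support of $\lambda$ ensures $(q,MP_f)=1$, and since $q\leq W^\prime R^2\ll X^{\vartheta-\delta}$ (using $W^\prime\ll e^{D_0}$), the modulus lies within the level of distribution allowed by Theorem \ref{UnifiedBV}.

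Next I would extract the expected main term $X/(\varphi(qM)\log X)$ from each such inner sum. The total error is controlled by invoking Theorem \ref{UnifiedBV} in conjunction with the estimate $\lambda_{\max}\ll y_{\max}(\log X)^k$ and a divisor-bound argument that accounts for the multiplicity with which each $q$ arises from distinct tuples $(d_i,e_i)_{i\neq m}$; together these yield the error term $O(y_{\max}^2 X/(\varphi(M)(\log X)^A))$ for any chosen $A$. The remaining main term takes the form
\[ \frac{X}{\varphi(V)\log X} \sideset{}{^\prime}\sum_{\substack{d,e\\ d_m=e_m=1}} \frac{\lambda_{d_1,\ldots,d_k}\lambda_{e_1,\ldots,e_k}}{\prod_{i\neq m}\varphi([d_i,e_i])}, \]
after which the identity $\varphi([d,e])^{-1}=\varphi(d)^{-1}\varphi(e)^{-1}\sum_{u\mid(d,e)}g(u)$ separates the variables while introducing the function $g$ in the natural place.

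Mirroring the remainder of Lemma \ref{S11}, I would next drop the coprimality conditions $(d_i,e_j)=1$ (for $i\neq j$) by M\"obius inversion with dummy variables $s_{i,j}$, perform the change of variables to $y^{(m)}$ as defined in the statement (verifying that the inversion formula holds verbatim as in Maynard's calculation), and split the $s_{i,j}$ sum in the usual manner: the $s_{i,j}=1$ contribution yields the stated main term; the intermediate range $1<s_{i,j}\leq D_0$ vanishes by the support of $y^{(m)}$; and the tail $s_{i,j}>D_0$ produces the first error term, via the same geometric-series estimate on $\sum_{s>D_0}\mu(s)^2/\varphi(s)^2$ used in the proof of Lemma \ref{S11}.

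The main obstacle is the Bombieri--Vinogradov step: specifically, the divisor-bound argument that rearranges the sum over the free tuples $(d_i,e_i)_{i\neq m}$ into a sum over the aggregated modulus $q$ so that Theorem \ref{UnifiedBV} can be applied uniformly in the residue class. Achieving the error bound in exactly the stated form requires keeping careful track of the factor $\varphi(M)^{-1}$ (which is built into Theorem \ref{UnifiedBV}) and ensuring the multiplicities $\tau_{O_k(1)}(q)$ can be absorbed into a power of $\log X$ that is then defeated by the arbitrary saving $(\log X)^{-A}$. Beyond this step, the argument is an essentially mechanical adaptation of the proof of Lemma \ref{S11} together with Maynard's original calculation.
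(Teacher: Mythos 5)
Your proposal follows essentially the same route as the paper's proof: expand the square, identify the congruence modulus $qM$ via CRT, argue that $d_m=e_m=1$ is forced, split off $\mathcal{P}_X/\varphi(qM)$ from the prime count, apply the $\varphi([d,e])^{-1}$ identity with $g$, diagonalize to $y^{(m)}$, and control the prime-counting errors by Cauchy--Schwarz together with Theorem \ref{UnifiedBV} and a divisor bound. The only stylistic difference is that the paper deduces $d_m=e_m=1$ by noting $nM+h_m$ must be coprime to the modulus, while you argue by the size of $[d_m,e_m]<R^2\ll X$; both give that the inner sum vanishes (not merely is negligible) when $d_m$ or $e_m$ exceeds $1$.
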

\begin{proof}
We first rearrange the sum to obtain
\begin{equation}
S^{(m)}_2=\sum_{\substack{d_1,\ldots,d_k\\e_1,\ldots,e_k}}\lambda_{d_1,\ldots,d_k}\lambda_{e_1,\ldots,e_k}\sum_{\substack{x\leq n<2x\\n\equiv \nu_0\pmod{W^\prime}\\ [d_i,e_i]\mid nM+h_i}}\chi_\mathbb{P}(nM+h_m).
\end{equation}
In the inner sum, if $W^\prime,[d_1,e_1],\ldots,[d_k,e_k]$ are pairwise relatively prime, the conditions determine $n$ modulo $q=W^\prime\prod_i[d_i,e_i]$, since $(M,[d_i,e_i])=1$ by the support of $\lambda$. In turn, $nM+h_m$ is determined modulo $qM=V\prod_i[d_i,e_i]$. Note that here $(q,P_f)=1$. Also, if $([d_i,e_i],nM+h_m)>1$ with $i\neq m$, then $p\mid\lvert h_i-h_m\rvert=fM$ for some $p\mid[d_i,e_i]$ and $f<D_0$ by the diameter of $\mathcal{H}$, and since $d_i$ and $e_i$ are relatively prime to both $M$ and $W$ by the support of $\lambda$, this is not possible. So $nM+h_m$ is relatively prime to the modulus if and only if $d_m=e_m=1$. Thus we can write
\begin{equation}\label{S2eq1}
\begin{split}
\sum_{\substack{x\leq n<2x\\n\equiv \nu_0\pmod{W^\prime}\\ [d_i,e_i]\mid nM+h_i}}\chi_\mathbb{P}(nM+h_m)&=\sum_{\substack{X+h_m\leq n<2X+h_m\\n\equiv b\pmod{qM}}}\chi_\mathbb{P}(n)\\&=\frac{\mathcal{P}_X}{\varphi(V)\prod_i\varphi([d_i,e_i])}+E(X,qM)+O(1),
\end{split}
\end{equation}
where
\begin{equation}
E(X,qM)=\Big\lvert\sum_{\substack{X\leq n<2X\\n\equiv b\pmod{qM}}}\chi_\mathbb{P}(n)-\frac{\mathcal{P}_X}{\varphi(qM)}\Big\rvert,
\end{equation}
$\mathcal{P}_X$ is the number of primes in $[X,2X]$, and the $O(1)$ term arises from ignoring the shift by $h_m$ in the sum. Thus the main term becomes
\begin{equation}
\frac{\mathcal{P}_X}{\varphi(V)}\sideset{}{^\prime}\sum_{\substack{d_1,\ldots,d_k\\e_1,\ldots,e_k}}\frac{\lambda_{d_1,\ldots,d_k}\lambda_{e_1,\ldots,e_k}}{\prod_i\varphi([d_i,e_i])}
\end{equation}
where $\sum^\prime$ denotes the constraint that $W^\prime,[d_1,e_1],\ldots,[d_k,e_k]$ are pairwise relatively prime. As before, there's no contribution when $(W^\prime,[d_i,e_i])>1$ or $(d_i,d_j)>1$, and we remove the conditions $(d_i,e_j)=1$ by multiplying our expression by $\sum_{s_{i,j}\mid d_i,e_j}\mu(s_{i,j})$. We also use the identity (valid for squarefree $d_i$ and $e_i$),
\begin{equation}
\frac{1}{\varphi([d_i,e_i])}=\frac{1}{\varphi(d_i)\varphi(e_i)}\sum_{u_i\mid d_i,e_i}g(u_i),
\end{equation}
where $g$ is the totally multiplicative function defined on primes by $g(p)=p-2$. The main term then becomes
\begin{equation}
\frac{\mathcal{P}_X}{\varphi(V)}\sum_{u_1,\ldots,u_k}\Bigl(\prod_{i=1}^kg(u_i)\Bigr)\sum_{s_{1,2},\ldots,s_{k-1,k}}\Bigl(\prod_{\substack{1\leq i,j\leq k\\i\neq j}}\mu(s_{i,j})\Bigr)\sum_{\substack{d_1,\ldots,d_k\\e_1,\ldots,e_k\\u_i\mid d_i,e_i\forall i\\s_{i,j}\mid d_i,e_j \forall i\neq j\\d_m=e_m=1}}\frac{\lambda_{d_1,\ldots,d_k}\lambda_{e_1,\ldots,e_k}}{\prod_i\varphi(d_i)\varphi(e_i)}.
\end{equation}
We again restrict $s_{i,j}$ to be coprime to $u_i$, $u_j$, $s_{i,a}$ and $s_{b,j}$ for all $a\neq i$ and $b\neq j$ as before, and make the change of variable
\begin{equation}\label{iden2}
y^{(m)}_{r_1,\ldots,r_k}=\Bigl(\prod_{i=1}^k\mu(r_i)g(r_i)\Bigr)\sum_{\substack{d_1,\ldots,d_k\\r_i\mid d_i\forall i\\d_m=1}}\frac{\lambda_{d_1,\ldots,d_k}}{\prod_i\varphi(d_i)}.
\end{equation}
This is invertible, and $y^{(m)}_{r_1,\ldots,r_k}$ is supported on $(\prod_ir_i,VP_f)=1$, $\prod_ir_i<R$, $\mu(\prod_ir_i)^2=1$ and $r_m=1$. Then the main term becomes
\begin{equation}
\frac{\mathcal{P}_X}{\varphi(V)}\sum_{u_1,\ldots,u_k}\Bigl(\prod_{i=1}^k\frac{\mu(u_i)^2}{g(u_i)}\Bigr)\sideset{}{^*}\sum_{s_{1,2},\ldots,s_{k-1,k}}\Bigl(\prod_{\substack{1\leq i,j\leq k\\i\neq j}}\frac{\mu(s_{i,j})}{g(s_{i,j})^2}\Bigr)y^{(m)}_{a_1,\ldots,a_k}y^{(m)}_{b_1,\ldots,b_k},
\end{equation}
where $a_j=u_j\prod_{i\neq j}s_{j,i}$ and $b_j=u_j\prod_{i\neq j}s_{i,j}$ for each $1\leq j\leq k$. Because of the restricted support of $y$, there is no contribution from terms with $(s_{i,j},VP_f)>1$. So we only need to consider $s_{i,j}=1$ or $s_{i,j}>D_0$. The contribution when $s_{i,j}>D_0$ is
\begin{align}
&\ll\frac{(y^{(m)}_{max})^2X}{\varphi(V)\log X}\Bigl(\sum_{\substack{u<R\\(u,VP_f)=1}}\frac{\mu(u)^2}{g(u)}\Bigr)^{k-1}\Bigl(\sum_s\frac{\mu(s)^2}{g(s)^2}\Bigr)^{k(k-1)-1}\sum_{s_{i,j}>D_0}\frac{\mu(s_{i,j})^2}{g(s_{i,j})^2}\\
&\ll\frac{(y^{(m)}_{max})^2\varphi(VP_f)^{k-1}X(\log X)^{k-2}}{\varphi(V)(VP_f)^{k-1}D_0}.\label{errS21}
\end{align}
The contribution from $s_{i,j}=1$ gives us the main term which is
\begin{equation}
\frac{\mathcal{P}_X}{\varphi(V)}\sum_{u_1,\ldots,u_k}\frac{(y^{(m)}_{u_1,\ldots,u_k})^2}{\prod_{i=1}^kg(u_i)}.
\end{equation}
By the prime number theorem, $\mathcal{P}_X=X/\log X+O(X/(\log X)^2))$, and the error here contributes
\begin{equation}
\frac{(y^{(m)}_{max})^2X}{\varphi(V)(\log X)^2}\Bigl(\sum_{\substack{u<R\\(u,VP_f)=1}}\frac{\mu(u)^2}{\varphi(u)}\Bigl)^{k-1}\ll\frac{(y^{(m)}_{max})^2\varphi(VP_f)^{k-1}X(\log X)^{k-3}}{\varphi(V)(VP_f)^{k-1}},
\end{equation}
which can be absorbed in the error term from (\ref{errS21}).

Now we turn to the contribution of the error terms in (\ref{S2eq1}), which is
\begin{equation}
\ll\sum_{\substack{d_1,\ldots,d_k\\e_1,\ldots,e_k}}\lvert\lambda_{d_1,\ldots,d_k}\lambda_{e_1,\ldots,e_k}\rvert\left(E(X,qM)+1\right).
\end{equation}
From the support of $\lambda$, we see that we only need to consider square-free $q$ with $q<W^\prime R^2$ and $(q,MP_f)=1$. Since for a square-free integer $q$ there are at most $\tau_{3k}(q)$ choices of $d_1,\ldots,d_k,e_1,\ldots,e_k$ for which $q=W^\prime\prod_i[d_i,e_i]$, we see that the error is
\begin{equation}
\ll\lambda^2_{max}\sum_{\substack{q<W^\prime R^2\\(q,MP_f)=1}}\mu(q)^2\tau_{3k}(q)E(X,qM)+\lambda^2_{max}\sum_{\substack{q<W^\prime R^2\\(q,MP_f)=1}}\mu(q)^2\tau_{3k}(q).
\end{equation}
Now the second term is $\ll \lambda_{max}^2W^\prime R^2\log(W^\prime R^2)^{3k-1}$. For the first term we use the Cauchy-Schwarz inequality and the trivial bound $E(X,qM)\ll X/\varphi(qM)$ to see that it is
\begin{equation}
\ll\frac{\lambda^2_{max}}{\varphi(M)^{1/2}}\biggl(\sum_{\substack{q<W^\prime R^2\\(q,MP_f)=1}}\mu(q)^2\tau_{3k}^2(q)\frac{X}{\varphi(q)}\biggr)^{1/2}\biggl(\sum_{\substack{q<W^\prime R^2\\(q,MP_f)=1}}\mu(q)^2E(X,qM)\biggr)^{1/2}.
\end{equation}
The first sum is $\ll X\log(W^\prime R^2)^{3k}$. Now for $X$ large enough, $W^\prime R^2\leq X^{e_f-\delta}$, so that Theorem \ref{UnifiedBV} applies to yield that the second sum is $\ll\frac{X}{\varphi(M)}(\log X)^{-A}$ for $A$ arbitrarily large. Thus the total contribution is
\begin{equation}
\ll \frac{y_{max}^2X}{\varphi(M)(\log X)^{A}},
\end{equation}
and this completes the proof.
\end{proof}
\begin{lem}\label{ymy}
If $r_m=1$,
\begin{equation}
y^{(m)}_{r_1,\ldots,r_k}=\sum_{a_m}\frac{y_{r_1,\ldots,r_{m-1},a_m,r_{m+1},\ldots,r_k}}{\varphi(a_m)}+O\left(\frac{y_{max}\varphi(VP_f)\log X}{VP_fD_0}\right).
\end{equation}
\end{lem}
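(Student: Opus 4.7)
The plan is to substitute the inversion formula (\ref{iden}) from Lemma \ref{S11}, expressing $\lambda_{d_1,\ldots,d_k}$ in terms of the $y$-values, directly into the defining expression (\ref{iden2}) for $y^{(m)}_{r_1,\ldots,r_k}$. Using $r_m=d_m=1$ reduces the $i=m$ factor to the identity and leaves the auxiliary variable $a_m$ (arising from the inversion) entirely free; I then swap the order of summation so that the outer sum is over the multi-index $(a_1,\ldots,a_k)$ and the inner sums are over the $d_i$ with $r_i\mid d_i\mid a_i$.

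Each of the $k-1$ inner Möbius sums over $d_i$ decouples and evaluates explicitly: writing $d_i=r_ie_i$ with $e_i\mid a_i/r_i$, the squarefreeness of $a_i$ makes $(a_i/r_i,r_i)=1$ automatic, and a short Euler-product computation gives
\begin{equation*}
\sum_{\substack{r_i\mid d_i\mid a_i\\\mu(d_i)^2=1}}\frac{\mu(d_i)d_i}{\varphi(d_i)}=\frac{r_i\mu(a_i)}{\varphi(a_i)}.
\end{equation*}
After the change of variable $a_i=r_ic_i$ for $i\neq m$ (with $c_i$ squarefree and coprime to $r_iVP_f$) and the coprime splittings $\mu(a_i)=\mu(r_i)\mu(c_i)$, $\varphi(a_i)=\varphi(r_i)\varphi(c_i)$, the expression collapses to
\begin{equation*}
y^{(m)}_{r_1,\ldots,r_k}=\prod_{i\neq m}\frac{g(r_i)r_i}{\varphi(r_i)^2}\sum_{a_m}\frac{1}{\varphi(a_m)}\sum_{c_i\,(i\neq m)}y_{r_1c_1,\ldots,a_m,\ldots,r_kc_k}\prod_{i\neq m}\frac{\mu(c_i)}{\varphi(c_i)^2}.
\end{equation*}

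From here the target identity separates into a main term (all $c_i=1$) and an off-diagonal remainder (some $c_i>1$). For the main term, the prefactor equals $\prod_{p\mid r_1\cdots\widehat{r_m}\cdots r_k}\bigl(1-(p-1)^{-2}\bigr)$; since the support condition $(\prod_ir_i,VP_f)=1$ forces every prime divisor of each $r_i$ to exceed $D_0$, this product differs from $1$ by $O(1/D_0)$, and the residual sum $\sum_{a_m}y_{r_1,\ldots,a_m,\ldots,r_k}/\varphi(a_m)$ is $O\bigl(y_{max}(\varphi(VP_f)/VP_f)\log X\bigr)$ by the standard Mertens-type estimate $\sum_{a<R,\,(a,VP_f)=1}\mu^2(a)/\varphi(a)\ll(\varphi(VP_f)/VP_f)\log R$. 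For the off-diagonal remainder the same coprimality condition forces any nontrivial $c_i$ to exceed $D_0$, and the tail sum $\sum_{c>D_0,\,(c,VP_f)=1}\mu^2(c)/\varphi(c)^2=O(1/D_0)$ supplies the saving; the remaining $a_m$-sum is handled as above. Both contributions then match the stated error $O\bigl(y_{max}\varphi(VP_f)\log X/(VP_fD_0)\bigr)$.

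The only mildly delicate point is clean bookkeeping of the support conditions across the substitution $a_i=r_ic_i$, so as to legitimately confine each nontrivial $c_i$ above $D_0$; no analytic input beyond what was already used in Lemma \ref{S11} is required.
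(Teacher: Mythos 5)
Your proposal is correct and follows essentially the same route as the paper: substitute the inversion identity into the definition of $y^{(m)}$, swap the orders of summation, evaluate the decoupled $d_i$-sums via $\sum_{r_i\mid d_i\mid a_i}\mu(d_i)d_i/\varphi(d_i)=r_i\mu(a_i)/\varphi(a_i)$, isolate the diagonal $a_i=r_i$ ($i\neq m$) term, and bound the off-diagonal tail using $(r,VP_f)=1\Rightarrow p>D_0$. The only cosmetic difference is your explicit substitution $a_i=r_ic_i$, which the paper handles implicitly by splitting $a_j=r_j$ versus $a_j>D_0r_j$; the estimates and the final observation that the prefactor $\prod_{i\neq m}g(r_i)r_i/\varphi(r_i)^2=1+O(D_0^{-1})$ are identical.
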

\begin{proof}
We assume that $r_m=1$. We substitute (\ref{iden}) into (\ref{iden2}) and obtain
\begin{equation}
y^{(m)}_{r_1,\ldots,r_k}=\Bigl(\prod_{i=1}^k\mu(r_i)g(r_i)\Bigr)\sum_{\substack{d_1,\ldots,d_k\\r_i\mid d_i\forall i\\d_m=1}}\biggl(\prod_{i=1}^k\frac{\mu(d_i)d_i}{\varphi(d_i)}\biggr)\sum_{\substack{a_1,\ldots,a_k\\d_i\mid a_i\forall i}}\frac{y_{a_1,\ldots,a_k}}{\prod_{i=1}^k\varphi(a_i)}.
\end{equation}
Swapping summations over $d$ and $a$, we have
\begin{equation}
y^{(m)}_{r_1,\ldots,r_k}=\Bigl(\prod_{i=1}^k\mu(r_i)g(r_i)\Bigr)\sum_{\substack{a_1,\ldots,a_k\\r_i\mid a_i\forall i}}\frac{y_{a_1,\ldots,a_k}}{\prod_{i=1}^k\varphi(a_i)}\sum_{\substack{d_1,\ldots,d_k\\d_i\mid a_i,r_i\mid d_i\forall i\\d_m=1}}\prod_{i=1}^k\frac{\mu(d_i)d_i}{\varphi(d_i)}.
\end{equation}
The inner sum can be directly computed when $a_i$ is squarefree, which is the only case that matters by the support of $y$. We have
\begin{align}
\sum_{d_i\mid a_i,r_i\mid d_i}\frac{\mu(d_i)d_i}{\varphi(d_i)}&=\frac{\mu(r_i)r_i}{\varphi(r_i)}\sum_{d_i\mid\frac{a_i}{r_i}}\frac{\mu(d_i)d_i}{\varphi(d_i)}\\
&=\frac{\mu(r_i)r_i}{\varphi(r_i)}\prod_{p\mid\frac{a_i}{r_i}}\frac{-1}{p-1}\\
&=\frac{\mu(r_i)r_i}{\varphi(r_i)}\frac{\mu(a_i/r_i)}{\varphi(a_i/r_i)}=\frac{\mu(a_i)r_i}{\varphi(a_i)}.
\end{align}
Hence
\begin{equation}
y^{(m)}_{r_1,\ldots,r_k}=\Bigl(\prod_{i=1}^k\mu(r_i)g(r_i)\Bigr)\sum_{\substack{a_1,\ldots,a_k\\r_i\mid a_i\forall i}}\frac{y_{a_1,\ldots,a_k}}{\prod_{i=1}^k\varphi(a_i)}\prod_{i\neq m}\frac{\mu(a_i)r_i}{\varphi(a_i)}.
\end{equation}
By the support of $y$, we need only consider $a_j$ with $(a_j,VP_f)=1$. This implies $a_j=r_j$ or $a_j>D_0r_j$. The total contribution from $a_j\neq r_j$ when $j\neq m$ is
\begin{equation}
\begin{split}
&\ll y_{max}\left(\prod_{i=1}^kg(r_i)r_i\right)\!\Biggl(\sum_{a_j>D_0r_j}\frac{\mu(a_j)^2}{\varphi(a_j)^2}\Biggr)\!\Biggl(\sum_{\substack{a_m<R\\(a_m,VP_f)=1}}\frac{\mu(a_j)^2}{\varphi(a_j)}\Biggr)\prod_{\substack{1\leq i\leq k\\i\neq j,m}}\Biggl(\sum_{r_i\mid a_i}\frac{\mu(a_i)^2}{\varphi(a_i)^2}\Biggr)\\
&\ll \left(\prod_{i=1}^k\frac{g(r_i)r_i}{\varphi(r_i)^2}\right)\frac{y_{max}\varphi(VP_f)\log R}{VP_fD_0}\ll\frac{y_{max}\varphi(VP_f)\log X}{VP_fD_0}.
\end{split}
\end{equation}
Thus we find that
\begin{equation}
y^{(m)}_{r_1,\ldots,r_k}=\left(\prod_{i=1}^k\frac{g(r_i)r_i}{\varphi(r_i)^2}\right)\sum_{a_m}\frac{y_{r_1,\ldots,r_{m-1},a_m,r_{m+1},\ldots,r_k}}{\varphi(a_m)}+O\left(\frac{y_{max}\varphi(VP_f)\log X}{VP_fD_0}\right).
\end{equation}
Since the product is $1+O(D_0^{-1})$, we have the result.
\end{proof}
\begin{lem}\label{S12}
Let $y_{r_1,\ldots,r_k}$ be given in terms of a piecewise differentiable function $F$ supported on $\mathcal{R}_k=\{(x_1,\ldots,x_k)\in[0,1]^k:\sum_{i=1}^kx_i=1\}$ by
\begin{equation}\label{yfdef}
y_{r_1,\ldots,r_k}=F\bigl(\frac{\log r_1}{\log R},\ldots,\frac{\log r_k}{\log R}\bigr)
\end{equation}
whenever $r=\prod_ir_i$ is squarefree and satisfies $(r,VP_f)=1$.
Put
\begin{equation}
F_{max}=\sup_{(t_1,\ldots,t_k)\in[0,1]^k}\lvert F(t_1,\ldots,t_k)\rvert+\sum_{i=1}^k\lvert\frac{\partial F}{\partial t_i}(t_1,\ldots,t_k)\rvert.
\end{equation}
Then
\begin{equation}
S_1=\frac{\varphi(VP_f)^kX(\log R)^k}{V(VP_f)^k}I_k(F)+O\Bigl(\frac{F_{max}^2\varphi(VP_f)^kX(\log X)^{k-1}\log\log X}{V(VP_f)^kD_0}\Bigr),
\end{equation}
where
\begin{equation}
I_k(F)=\int_0^1\cdots\int_0^1F(t_1,\ldots,t_k)^2dt_1\ldots dt_k.
\end{equation}
\end{lem}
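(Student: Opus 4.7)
The plan is to feed the prescribed weights \eqref{yfdef} into the formula of Lemma \ref{S11}. Since $y_{max}\leq F_{max}$, the error term there is already of the claimed form, so the task reduces to finding the asymptotics of the main sum
\begin{equation*}
\mathcal{T}:=\sum_{u_1,\ldots,u_k}\frac{F\bigl(\tfrac{\log u_1}{\log R},\ldots,\tfrac{\log u_k}{\log R}\bigr)^{2}}{\prod_{i=1}^{k}\varphi(u_i)},
\end{equation*}
subject to $(\prod u_i,VP_f)=1$, $\mu(\prod u_i)^2=1$, and $\prod u_i<R$. Because $F$ vanishes outside $\sum_i t_i\leq 1$, the inequality $\prod u_i<R$ holds automatically on the support of the summand and can be dropped.

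My next step would be to decouple the pairwise coprimality inherent in $\mu(\prod u_i)^2=1$ via the standard M\"obius device $\mathbf{1}[(u_i,u_j)=1]=\sum_{t_{i,j}\mid(u_i,u_j)}\mu(t_{i,j})$ applied to each pair $i<j$. Because the summand is supported on $(\prod u_i,VP_f)=1$ and $VP_f$ contains every prime $\leq D_0$, each auxiliary $t_{i,j}$ must either equal $1$ or exceed $D_0$. The tail $t_{i,j}>D_0$ is bounded exactly as in Lemma \ref{S11}, using $\sum_{s>D_0}\mu(s)^2/\varphi(s)^2\ll 1/D_0$ combined with one-variable sum estimates for the other indices, and contributes an error of the order advertised in the lemma. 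What remains is the ``diagonal'' contribution in which each $u_i$ runs independently, subject only to $(u_i,VP_f)=1$ and $\mu(u_i)^2=1$.

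For that remaining sum I would invoke the one-dimensional asymptotic
\begin{equation*}
\sum_{\substack{(u,VP_f)=1\\\mu(u)^2=1}}\frac{g(\log u/\log R)}{\varphi(u)}=\frac{\varphi(VP_f)}{VP_f}\log R\int_0^1 g(t)\,dt+O(g_{max}\log\log X),
\end{equation*}
valid for any piecewise differentiable $g$ on $[0,1]$. This is proved by Perron's formula applied to $\sum 1/(\varphi(u)u^s)$ restricted by the indicated support, which factors as $\zeta(1+s)$ times an Euler product that is holomorphic near $s=0$ and evaluates there to $\varphi(VP_f)/VP_f$; shifting the contour past the simple pole at $s=0$ yields the main term, while the standard bound for $1/\zeta$ on a line $\operatorname{Re} s=-c/\log\log X$ produces the $\log\log X$ in the error. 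Iterating this estimate through $u_1,\ldots,u_k$, with the partial integral of $F^2$ acting as the weight function at each stage, converts $\mathcal{T}$ into $(\varphi(VP_f)/VP_f)^k(\log R)^k I_k(F)$ with errors that accumulate to at most $O(F_{max}^2(\varphi(VP_f)/VP_f)^k(\log X)^{k-1}\log\log X)$. Multiplying by $X/V$ then gives the lemma. The main technical obstacle is this error bookkeeping---in particular verifying that the M\"obius-tail error and the accumulated Perron error both fit under the single bound stated---and the piecewise differentiability hypothesis on $F$ enters precisely here, to ensure that each iterated partial integral of $F^2$ remains a legitimate weight for the next round of the Perron estimate.
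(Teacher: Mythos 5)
Your proposal is essentially sound and mirrors the paper's strategy: substitute the explicit choice \eqref{yfdef} into Lemma~\ref{S11}, strip the pairwise coprimality constraint at the cost of an error of size $O(F_{max}^2\varphi(VP_f)^kX(\log X)^k/(V(VP_f)^kD_0))$, and then evaluate the resulting decoupled sum as a $k$-fold iterated one-variable asymptotic. Two details differ from the paper. First, you reintroduce the M\"obius device $\mathbf{1}[(u_i,u_j)=1]=\sum_{t_{i,j}\mid(u_i,u_j)}\mu(t_{i,j})$; the paper gets to the same decoupled sum more economically by simply observing that any common prime $p\mid(u_i,u_j)$ lying outside $VP_f$ must exceed $D_0$, so the contribution of non-coprime tuples is bounded directly by $\sum_{p>D_0}1/(p-1)^2$ times a $k$-fold product of unrestricted one-variable sums. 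Second, rather than re-deriving the iterated asymptotic ``by hand'' via Perron's formula, the paper cites Maynard's Lemma~6.1 (a ready-made multi-dimensional version of exactly the estimate you iterate), feeding it $\kappa=1$, $\gamma(p)=\mathbf{1}[p\nmid VP_f]$, $L\ll\log\log X$. Your one-dimensional asymptotic is correct, and the more standard route to it would be partial (Abel) summation against $\sum_{u\leq y,(u,VP_f)=1}\mu(u)^2/\varphi(u)=\tfrac{\varphi(VP_f)}{VP_f}\log y+O(L)$ rather than a contour shift; Perron works but is heavier machinery than the piecewise-differentiable hypothesis requires. One minor wrinkle in your bookkeeping: the inherited error from Lemma~\ref{S11} is of size $(\log X)^k/D_0$ (after $y_{max}\leq F_{max}$), which is slightly larger than the $(\log X)^{k-1}\log\log X/D_0$ the lemma advertises, so it is not literally ``already of the claimed form'' -- the paper itself carries this same tension, and it is harmless because $D_0=\log\log\log X$ still makes the whole error $o$ of the main term, but you should not assert the Lemma~\ref{S11} error is already of the stated form.
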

\begin{proof}
We substitute (\ref{yfdef}) into our expression for $S_1$ from Lemma \ref{S11} and obtain
\begin{equation}
\begin{split}
S_1=&\frac{X}{V}\sum_{\substack{u_1,\ldots,u_k\\(u_i,u_j)=1,\forall i\neq j\\(u_i,VP_f)=1\forall i}}\biggl(\prod_{i=1}^k\frac{\mu(u_i)^2}{\varphi(u_i)}\biggr)F\bigl(\frac{\log u_1}{\log R},\ldots,\frac{\log u_k}{\log R}\bigr)^2\\
&+O\left(\frac{F_{max}^2\varphi(VP_f)^kX(\log X)^k}{V(VP_f)^kD_0}\right).
\end{split}
\end{equation}
Now if $(u_i,u_j)>1$ for some $i\neq j$ and $(u_i,VP_f)=(u_j,VP_f)=1$, then there is a prime $p\mid(u_i,u_j)$ with $p\nmid VP_f$, so \emph{a fortiori} $p\nmid W$ and $p>D_0$. Thus the cost of dropping the condition $(u_i,u_j)=1$ is an error of size
\begin{equation}
\begin{split}
&\ll\frac{F_{max}^2X}{V}\sum_{p>D_0}\:\:\sum_{\substack{u_1,\ldots,u_k<R\\p\mid u_i,u_j\\(u_i,VP_f)=1\forall i}}\:\:\:\prod_{i=1}^k\frac{\mu(u_i)^2}{\varphi(u_i)}\\
&\ll\frac{F_{max}^2X}{V}\sum_{p>D_0}\frac{1}{(p-1)^2}\Bigl(\sum_{\substack{u<R\\(u,VP_f)=1}}\frac{\mu(u)^2}{\varphi(u)}\Bigr)^k\ll\frac{F_{max}^2\varphi(VP_f)^kX(\log X)^k}{V(VP_f)^kD_0}.
\end{split}
\end{equation}
Thus we are left to evaluate
\begin{equation}
\sum_{\substack{u_1,\ldots,u_k\\(u_i,VP_f)=1\forall i}}\biggl(\prod_{i=1}^k\frac{\mu(u_i)^2}{\varphi(u_i)}\biggr)F\bigl(\frac{\log u_1}{\log R},\ldots,\frac{\log u_k}{\log R}\bigr)^2.
\end{equation}
This differs from the corresponding sum in Maynard's work only in that we have a $VP_f$, which does not have as small prime factors, in place of $W$. We put,
\begin{equation}
\gamma(p)=\begin{cases}
1, \:\:&\text{if}\:\:p\nmid VP_f,\\
0, &\text{otherwise}.
\end{cases}
\end{equation}
Then we can use Lemma 6.1 of \cite{Ma2015} with $\kappa=1$,
\begin{equation}
L\ll 1+\sum_{p\mid VP_f}\frac{\log p}{p}\ll\Bigl(\sum_{p\leq \log R}+\sum_{\substack{p\mid MP_f\\p>\log R}}\Bigr)\frac{\log p}{p}\ll\log\log R +\frac{\log MP_f}{\log R}\ll\log\log X,
\end{equation}
and $A_1$ and $A_2$ suitable constants. The lemma then yields
\begin{equation}
\begin{split}
\sum_{\substack{u_1,\ldots,u_k\\(u_i,VP_f)=1\forall i}}\biggl(&\prod_{i=1}^k\frac{\mu(u_i)^2}{\varphi(u_i)}\biggr)F\bigl(\frac{\log u_1}{\log R},\ldots,\frac{\log u_k}{\log R}\bigr)^2\\
&=\frac{\varphi(VP_f)^k(\log R)^k}{(VP_f)^k}I_k(F)+O\Bigl(\frac{F_{max}^2\varphi(VP_f)^k(\log X)^{k-1}\log\log X}{(VP_f)^kD_0}\Bigr)
\end{split}
\end{equation}
with
\begin{equation}
I_k(F)=\int_0^1\cdots\int_0^1F(t_1,\ldots,t_k)^2dt_1\ldots dt_k,
\end{equation}
and the proof is complete.
\end{proof}

\begin{lem}\label{S22}
Let $y_{r_1,\ldots,r_k}$, $F$, and $F_{max}$ be as in Lemma \textup{\ref{S12}}. Then
\begin{equation}
S^{(m)}_2=\frac{\varphi(VP_f)^kX(\log R)^{k+1}}{V(VP_f)^k\log X}J^{(m)}_k(F)+O\left(\frac{F_{max}^2\varphi(VP_f)^kX(\log X)^k}{V(VP_f)^kD_0}\right),
\end{equation}
where
\begin{equation}
J_k^{(m)}(F)=\int_0^1\cdots\int_0^1\Biggl(\int_0^1F(t_1,\ldots,t_k)dt_m\Biggr)^2dt_1\ldots dt_{m-1}dt_{m+1}\ldots dt_k.
\end{equation}
\end{lem}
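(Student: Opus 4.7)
The strategy is to mirror the proof of Lemma \ref{S12}, but prefaced by an extra step that converts the weights $y^{(m)}_{r_1,\ldots,r_k}$ appearing in Lemma \ref{S21} into weights expressible in terms of $F$. Specifically, the plan is: start from the identity for $S_2^{(m)}$ given by Lemma \ref{S21}; apply Lemma \ref{ymy} to reduce $y^{(m)}$ to an average of $y$'s over the $m$-th coordinate; substitute (\ref{yfdef}) and evaluate the inner Mertens-type sum; square, substitute back, drop pairwise coprimality, and apply Maynard's Lemma 6.1 to the remaining $(k-1)$-dimensional sum in exactly the same way as in the proof of Lemma \ref{S12}.

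Concretely, Lemma \ref{ymy} combined with (\ref{yfdef}) yields (since $r_m=1$ on the support of $y^{(m)}$)
\begin{equation*}
y^{(m)}_{r_1,\ldots,r_k}=\sum_{\substack{a_m\\(a_m,VP_f)=1}}\frac{F\bigl(\tfrac{\log r_1}{\log R},\ldots,\tfrac{\log a_m}{\log R},\ldots,\tfrac{\log r_k}{\log R}\bigr)}{\varphi(a_m)}+O\!\Bigl(\tfrac{F_{max}\varphi(VP_f)\log X}{VP_fD_0}\Bigr).
\end{equation*}
Partial summation against the Mertens estimate $\sum_{a\leq Y,(a,VP_f)=1}\mu(a)^2/\varphi(a)\sim(\varphi(VP_f)/VP_f)\log Y$ evaluates the inner sum as $(\varphi(VP_f)/VP_f)\log R\cdot F^{(m)}$, where $F^{(m)}$ denotes the partial integral of $F$ over its $m$-th argument; the truncation $a_m<R/\prod_{i\neq m}r_i$ is automatically respected by the support of $F$ inside $\mathcal{R}_k$. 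Squaring, substituting into Lemma \ref{S21}, and dropping the pairwise coprimality conditions on the $u_i$ via the same $p>D_0$ argument as in Lemma \ref{S12}, one arrives at a $(k-1)$-fold sum weighted by $1/g(u_i)$, $(u_i,VP_f)=1$, to which Maynard's Lemma 6.1 with $\kappa=1$ and $L\ll\log\log X$ applies. The resulting singular-series constant, multiplied by the $(\varphi(VP_f)/VP_f)^2(\log R)^2$ from the $y^{(m)}$ asymptotic and the $1/(\varphi(V)\log X)$ prefactor from Lemma \ref{S21}, collapses via $\varphi(VP_f)=\varphi(V)\varphi(P_f)$ (valid because $W^\prime$ was constructed so that $(V,P_f)=1$) to exactly $\varphi(VP_f)^k X(\log R)^{k+1}/(V(VP_f)^k\log X)$, yielding the claimed main term. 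The error contributions from Lemma \ref{S21} (including the Bombieri--Vinogradov term, which is negligible for $A$ chosen large in Lemma \ref{S21}), from Lemma \ref{ymy}, from the partial summation, from the coprimality relaxation, and from Maynard's Lemma 6.1, all fit inside the stated $O(F_{max}^2\varphi(VP_f)^kX(\log X)^k/(V(VP_f)^kD_0))$.

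The main obstacle is the singular-series bookkeeping in the final collapse: one must verify that the per-variable constants from Maynard's Lemma 6.1 applied to the $1/g$-weighted sum, together with the $(\varphi(VP_f)/VP_f)^2$ factor from the $y^{(m)}$ asymptotic and the $\varphi(V)^{-1}$ from Lemma \ref{S21}, produce exactly the $\varphi(VP_f)^k/(V(VP_f)^k)$ coefficient that appears in Lemma \ref{S12}. This is the same algebraic identity between the Euler factors of $\varphi$ and $g$ that makes the original Maynard--Tao computation of the $S_2^{(m)}/S_1$ ratio work, and it survives our replacement of Maynard's $W$ by $VP_f$ (in the support of $y$) and by $V$ (in the $W$-trick) precisely because the two moduli were defined to be coprime.
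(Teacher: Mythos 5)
Your proposal follows the same route as the paper: start from the expression in Lemma \ref{S21}, convert $y^{(m)}$ to $F$ via Lemma \ref{ymy} and \eqref{yfdef}, evaluate the inner $a_m$-sum by a Mertens-type estimate (the paper uses Maynard's Lemma~6.1 here too, which also tracks the error), square and substitute back, drop pairwise coprimality, apply Lemma~6.1 once more to the remaining $(k-1)$-fold sum, and simplify. The overall structure is correct.

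There is, however, a real inaccuracy in the final bookkeeping step. You assert that the Euler factors ``collapse via $\varphi(VP_f)=\varphi(V)\varphi(P_f)$ (valid because $W'$ was constructed so that $(V,P_f)=1$) to exactly'' the claimed coefficient. Two points. First, $(V,P_f)=1$ is \emph{not} guaranteed: $W'=W/(W,P_fM)$ does give $(W',P_f)=1$, but $V=W'M$ and nothing forces $p_0\nmid M$ --- excluding $M$ that are multiples of elements of $\mathcal{Z}$ does not forbid $p_0\mid M$ (only divisibility by the particular numbers in $\mathcal{Z}$ is forbidden). Second, even granting $(V,P_f)=1$, the collapse is not exact: after Maynard's lemma one has the prefactor
\begin{equation*}
\frac{\varphi(VP_f)^{k+1}}{\varphi(V)(VP_f)^{k+1}}
=\frac{\varphi(VP_f)^{k}}{V(VP_f)^{k}}\cdot\frac{V}{\varphi(V)}\cdot\frac{\varphi(VP_f)}{VP_f}
=\frac{\varphi(VP_f)^{k}}{V(VP_f)^{k}}\prod_{\substack{p\mid P_f\\p\nmid V}}\Bigl(1-\tfrac{1}{p}\Bigr),
\end{equation*}
and the remaining product is either vacuous or equals $1-p_0^{-1}=1+O\bigl((\log\log X)^{-1}\bigr)$. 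So the identity you want holds only up to a factor of $1+O\bigl((\log\log X)^{-1}\bigr)$; this discrepancy does get absorbed into the stated $O(\cdot/D_0)$ error term because $D_0=\log\log\log X\ll\log\log X$, but that absorption is a step that must be stated, not an exact algebraic cancellation. The paper handles both cases ($p_0\mid V$ and $p_0\nmid V$) uniformly through precisely this Euler-product manipulation.
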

\begin{proof}
From Lemma \ref{S21}, we want to evaluate the sum
\begin{equation}
\sum_{u_1,\ldots,u_k}\frac{(y^{(m)}_{u_1,\ldots,u_k})^2}{\prod_{i=1}^kg(u_i)}.
\end{equation}
First we estimate $y^{(m)}_{r_1,\ldots,r_k}$. Recall that $y^{(m)}_{r_1,\ldots,r_k}$ is supported on $(\prod_ir_i,VP_f)=1$, $\mu(\prod_ir_i)^2=1$, $(r_i,r_j)=1$ when $i\neq j$ and $r_m=1$. Then substituting (\ref{yfdef}) into our expression for $y^{(m)}_{r_1,\ldots,r_k}$ from Lemma \ref{ymy}, we obtain
\begin{equation}
\begin{split}
y^{(m)}_{r_1,\ldots,r_k}=&\sum_{(u,VP_f\prod_ir_i)=1}\frac{\mu(u)^2}{\varphi(u)}F\bigl(\frac{\log r_1}{\log R},\ldots,\frac{\log r_{m-1}}{\log R},\frac{\log u}{\log R},\frac{\log r_{m+1}}{\log R},\ldots,\frac{\log r_k}{\log R}\bigr)\\
&+O\left(\frac{F_{max}\varphi(VP_f)\log X}{VP_fD_0}\right).
\end{split}
\end{equation}
From this it is plain that
\begin{equation}
y^{(m)}_{max}\ll\frac{\varphi(VP_f)}{VP_f}F_{max}\log X.
\end{equation}
Now we use Lemma 6.1 of \cite{Ma2015} again, with $\kappa=1$,
\begin{equation}
\gamma(p)=\begin{cases}
1, \:\:&\text{if}\:\:p\nmid VP_f\prod_{i=1}^kr_i,\\
0, &\text{otherwise}.
\end{cases}
\end{equation}
\begin{equation}
L\ll 1+\sum_{p\mid V\prod_ir_i}\frac{\log p}{p}\ll\Bigl(\sum_{p\leq \log R}+\sum_{\substack{p\mid MP_f\prod_ir_i\\p>\log R}}\Bigr)\frac{\log p}{p}\ll\log\log X,
\end{equation}
and $A_1$, $A_2$ suitable constants to obtain
\begin{equation}\label{ymff}
y^{(m)}_{r_1,\ldots,r_k}=(\log R)\frac{\varphi(VP_f)}{VP_f}\Bigl(\prod_{i=1}^k\frac{\varphi(r_i)}{r_i}\Bigr)F_{r_1,\ldots,r_k}^{(m)}+O\left(\frac{F_{max}\varphi(VP_f)\log X}{VP_fD_0}\right),
\end{equation}
where
\begin{equation}
F_{r_1,\ldots,r_k}^{(m)}=\int_0^1F\bigl(\frac{\log r_1}{\log R},\ldots,\frac{\log r_{m-1}}{\log R},t_m,\frac{\log r_{m+1}}{\log R},\ldots,\frac{\log r_k}{\log R}\bigr)dt_m.
\end{equation}
This is valid if $r_m=1$, and $r=\prod_{i=1}^kr_i$ satisfies $(r,VP_f)=1$ and $\mu(r)^2=1$, otherwise $y^{(m)}_{r_1,\ldots,r_k}=0$. Squared, (\ref{ymff}) gives
\begin{equation}
\begin{split}
(y^{(m)}_{r_1,\ldots,r_k})^2=&(\log R)^2\frac{\varphi(VP_f)^2}{(VP_f)^2}\Bigl(\prod_{i=1}^k\frac{\varphi(r_i)^2}{r_i^2}\Bigr)(F_{r_1,\ldots,r_k}^{(m)})^2\\
&+O\left(\frac{(F_{max})^2\varphi(VP_f)^2(\log X)^2}{(VP_f)^2D_0}\right).
\end{split}
\end{equation}
Using this in the expression for $S_2^{(m)}$ from Lemma \ref{S21}, we have
\begin{multline}
S^{(m)}_2=\frac{\varphi(VP_f)^2X(\log R)^2}{\varphi(V)(VP_f)^2\log X}\sum_{\substack{r_1,\ldots,r_k\\(r_i,VP_f)=1\\(r_i,r_j)=1\forall i\neq j\\r_m=1}}\Bigl(\prod_{i=1}^k\frac{\mu(r_i)^2\varphi(r_i)^2}{g(r_i)r_i^2}\Bigr)(F^{(m)}_{r_1,\ldots,r_k})^2\\+O\left(\frac{F_{max}^2\varphi(VP_f)^kX(\log X)^k}{V(VP_f)^kD_0}\right).
\end{multline}
We drop the condition $(r_i,r_j)=1$ as before, this time introducing an error of size
\begin{equation}
\begin{split}
&\ll\frac{F_{max}^2\varphi(VP_f)^2X(\log R)^2}{\varphi(V)(VP_f)^2\log X}\biggl(\sum_{p>D_0}\frac{\varphi(p)^4}{g(p)^2p^4}\biggr)\biggl(\sum_{\substack{r<R\\(r,VP_f)=1}}\frac{\varphi(r)^2}{g(r)r^2}\biggr)^{k-1}\\
&\ll\frac{F_{max}^2\varphi(VP_f)^{k+1}X(\log X)^k}{\varphi(V)(VP_f)^{k+1}D_0}.
\end{split}
\end{equation}
Thus we are left to evaluate
\begin{equation}
\sum_{\substack{r_1,\ldots,r_{m-1},r_{m+1},\ldots,r_k\\(r_i,VP_f)=1}}\Bigl(\prod_{i=1}^k\frac{\mu(r_i)^2\varphi(r_i)^2}{g(r_i)r_i^2}\Bigr)(F^{(m)}_{r_1,\ldots,r_k})^2.
\end{equation}
Again we apply Lemma 6.1 from Maynard with $\kappa=1$, with
\begin{equation}
\gamma(p)=\begin{cases}
1-\frac{p^2-3p+1}{p^3-p^2-2p+1}, \:\:&\text{if}\:\:p\nmid VP_f,\\
0, &\text{otherwise},
\end{cases}
\end{equation}
\begin{equation}
L\ll 1+\sum_{p\mid VP_f}\frac{\log p}{p}\ll\log\log X,
\end{equation}
and $A_1$, $A_2$ suitable constants. The singular series in this case is
\begin{equation}
\mathfrak{S}=\frac{\varphi(VP_f)}{VP_f}\bigl(1+O\bigl(\frac{1}{D_0}\bigr)\bigr),
\end{equation}
and we obtain
\begin{equation}\label{S2eq2}
S^{(m)}_2=\frac{\varphi(VP_f)^{k+1}X(\log R)^{k+1}}{\varphi(V)(VP_f)^{k+1}\log X}J^{(m)}_k(F)+O\left(\frac{F_{max}^2\varphi(VP_f)^{k+1}X(\log X)^k}{\varphi(V)(VP_f)^{k+1}D_0}\right).
\end{equation}
Now in the main term we have
\begin{equation}
\begin{split}
\frac{\varphi(VP_f)}{\varphi(V)(VP_f)}&=\frac{1}{V}\cdot\frac{V}{\varphi(V)}\cdot\frac{\varphi(VP_f)}{(VP_f)}\\
&=\frac{1}{V}\prod_{p\mid V}\frac{p}{p-1}\prod_{p\mid VP_f}\frac{p-1}{p}\\
&=\frac{1}{V}\prod_{\substack{p\mid P_f\\p\nmid V}}\frac{p-1}{p}.
\end{split}
\end{equation}
This last product is either vacuous, or consists of a single factor $(1-p_0^{-1})$, which is $1+O\left((\log\log X)^{-1}\right)$. Thus we may replace \eqref{S2eq2}, within acceptable error, with
\begin{equation}
S^{(m)}_2=\frac{\varphi(VP_f)^kX(\log R)^{k+1}}{V(VP_f)^k\log X}J^{(m)}_k(F)+O\left(\frac{F_{max}^2\varphi(VP_f)^kX(\log X)^k}{V(VP_f)^kD_0}\right),
\end{equation}
where we have replaced $\frac{\varphi(VP_f)}{\varphi(V)(VP_f)}$ with $1/V$ in the error term as well.
\end{proof}

\section{Discussion}
Baker and Zhao also consider primes in arithmetic progressions, except they prove their result for certain smooth moduli (recall that a number is called $y$-smooth if it has no prime factor exceeding $y$). The techniques they employ involve estimating Dirichlet polynomials and appealing to a zero-free region described in terms of the largest prime and the squarefree kernel of $M$ to obtain the required Bombieri-Vinogradov type theorem. Their result~\cite{BZ2014}*{Theorem 1} reads as follows (with the notation adapted where applicable to avoid confusion).
\begin{BZ*}
Let $\eta>0$, $r\geq1$, and let $M=X^\theta$ with $0<\theta\leq5/12-\eta$, $(a,M)=1$. Let
\begin{equation*}
K(\theta)=
\begin{cases}
\frac{4}{1-2\theta}\quad &\text{if $\theta<2/5-\varepsilon$,}\\
\frac{40}{9-20\theta}\quad &\text{if $\theta\geq2/5-\varepsilon$,}.
\end{cases}
\end{equation*}
Suppose that $M$ satisfies
\begin{equation*}
\max\{p:p\mid M\}<\exp\left(\frac{\log X}{B\log\log X}\right),\quad\prod_{p\mid M}p< X^\delta,\quad w\nmid M
\end{equation*}
with
\begin{equation*}
B=\frac{C_1}{\eta}\exp\left(\frac{4(r+1)}{K(\theta)}\right),\quad\delta=\frac{C_3\eta}{r+\log(1/\eta)}\exp\left(-\frac{4(r+1)}{K(\theta)}\right)
\end{equation*}
for suitable absolute positive constants $C_1$ and $C_3$, and $w$ denotes the possibly existing unique exceptional modulus to which there's a Dirichlet $L$-function with a zero in the region $\beta>c_1/\log X$. There are primes $p_n<\ldots<p_{n+r}$ in $(X/2,X]$, with $p_i\equiv a\pmod{M}$ such that
\begin{equation*}
p_{n+r}-p_n<C_2Mr\exp\left(K(\theta)r\right).
\end{equation*}
Here $C_2$ is a positive absolute constant.
\end{BZ*}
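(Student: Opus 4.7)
The plan is to mirror the Maynard--Tao strategy as adapted to arithmetic progressions in the present paper, but with the Bombieri--Vinogradov input replaced by one tailored to smooth moduli, exploiting the smoothness hypotheses on $M$ to push the effective level of distribution high enough that the resulting bound takes the form dictated by $K(\theta)$. Concretely, I would set up the sieve essentially as in Section~\ref{notation}: fix an admissible $k$-tuple $\mathcal{H}=\{h_1,\ldots,h_k\}$ obtained by dilating a standard admissible tuple by $M$ and shifting by $a$, choose the weights $\lambda_{d_1,\ldots,d_k}$ supported on products coprime to $M$, to $W$, and to any exceptional modulus, and study the sum $S^{(\rho)}=S_2-\rho S_1$. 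Asymptotic evaluations of $S_1$ and $S_2$ then proceed along the lines of Lemmas~\ref{S11}--\ref{S22}, and the positivity of $S^{(\rho)}$ for $\rho=r$ produces the desired $r+1$ primes in the progression; the geometric information about the diameter will give the bound $p_{n+r}-p_n\ll Mk\log k$, and substituting the appropriate $k$ yields the advertised shape.

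The real work is in establishing the Bombieri--Vinogradov type estimate valid over an exceptional set of moduli $qM$ with $q\leq X^{\vartheta-\varepsilon}$ for an effective $\vartheta$ that depends on $\theta$ through $K(\theta)$. Rather than the zero-density Theorem~\ref{densitythm} used here, Baker--Zhao's route is the classical Dirichlet polynomial approach: expand $\psi(X;qM,a)$ via the explicit formula, reduce the error to a sum over zeros of $L(s,\psi\xi)$ with $\psi$ primitive mod~$q$ and $\xi$ mod~$m\mid M$, and then split the resulting Dirichlet polynomial estimates into ranges by the size of $q$ and the imaginary part of the zeros. The crucial input is a zero-free region for these $L$-functions that depends on the largest prime factor $p_{\max}(M)$ and the squarefree kernel $\prod_{p\mid M}p$ of~$M$: the smoothness bound $p_{\max}(M)<\exp(\log X/(B\log\log X))$ ensures that no $L$-function to a related modulus has a Siegel-like zero too close to $s=1$ outside the single exceptional modulus~$w$, while the bound on the squarefree kernel controls the number of primitive characters induced modulo divisors of~$M$. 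Using a Vaughan (or Heath--Brown) identity to decompose $\Lambda$ into bilinear sums and applying mean-value estimates for Dirichlet polynomials of length $X$ over such moduli, one obtains a bound of Bombieri--Vinogradov type with the level of distribution corresponding to $K(\theta)$; the piecewise definition of $K(\theta)$ reflects which mean-value bound dominates in the two ranges of~$\theta$, just as the split into $[1/4,1/2]$ and $[2/15,1/4]$ did in the proof of Theorem~\ref{BV1}.

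Once this estimate is available, the rest of the argument is essentially bookkeeping. One runs the Maynard--Tao machinery at level $\vartheta$ matched to~$K(\theta)$, invokes the lower bound $M_k>\log k-2\log\log k-2$ from Maynard's Proposition~4.3 to find a $k$ of order $\exp(K(\theta)r)$ that guarantees $r+1$ primes, forms the admissible tuple $\{Mp_{\pi(k)+1}+a,\ldots,Mp_{\pi(k)+k}+a\}$ of diameter $\ll Mk\log k$, and reads off the final gap bound $p_{n+r}-p_n\ll Mr\exp(K(\theta)r)$. The constants $B$ and $\delta$ in the smoothness hypotheses then emerge by tracing how the zero-free and zero-density inputs feed into the required level of distribution, with the factor $\exp(4(r+1)/K(\theta))$ coming from the need to push $\vartheta$ close enough to its threshold to accommodate a tuple of length growing like $\exp(K(\theta)r)$.

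The main obstacle, as in every implementation of the Maynard--Tao machinery beyond the trivial range of moduli, is the Bombieri--Vinogradov step: specifically, combining the smoothness of $M$ with Dirichlet polynomial mean-value bounds so that the exceptional characters can be controlled and a level of distribution matching $K(\theta)$ is obtained uniformly in $M$. Balancing the Siegel-zero contribution (which forces the exclusion of~$w$) against the contribution of induced characters modulo divisors of $M$ (which forces the squarefree-kernel bound) and doing so tightly enough for the constants $B$ and $\delta$ to come out as stated is where all the delicacy lies; everything else is routine once this engine is in place.
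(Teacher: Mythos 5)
The statement you attempted to prove is not proved in this paper: it is quoted (with adapted notation) from Baker and Zhao~\cite{BZ2014}*{Theorem 1} in the Discussion section, purely for comparison with Theorem~\ref{gnrlthm}. The present paper's own machinery --- the zero-density estimate of Theorem~\ref{densitythm} feeding into the Bombieri--Vinogradov-type Theorem~\ref{BV1} --- proves a different-shaped result, valid for almost all moduli up to $X^{5/12-\eta}$ rather than for the smooth moduli in the Baker--Zhao hypothesis, so there is no in-paper proof against which to measure your sketch.

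Your proposal is broadly consistent with the one-sentence characterization the paper gives of Baker and Zhao's method (Dirichlet polynomial estimates combined with a zero-free region quantified in terms of $\max\{p:p\mid M\}$ and $\prod_{p\mid M}p$), but it treats precisely that engine as a black box. You assert that a Vaughan or Heath--Brown identity plus mean-value bounds for Dirichlet polynomials yield a level of distribution matching $K(\theta)$, that the two branches of $K(\theta)$ correspond to which mean-value bound dominates, and that $B$ and $\delta$ ``emerge by tracing'' the zero-free input; none of this is carried out, nor even outlined concretely enough to see that the exponents $4/(1-2\theta)$ and $40/(9-20\theta)$ or the specific forms of $B$ and $\delta$ actually come out. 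The part that is genuinely new relative to ordinary Maynard--Tao --- the smooth-modulus Bombieri--Vinogradov estimate with that particular $K(\theta)$ --- is exactly the part your proposal leaves unsupplied. To verify or reconstruct it you would need to consult~\cite{BZ2014} directly; the present paper contains no such argument, and if your aim was to reprove something this paper actually establishes, the relevant target is Theorem~\ref{gnrlthm}, whose proof is by a different route.
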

Recalling our Theorem \ref{gnrlthm},
\begin{equation*}
p_{n+r}-p_n\ll\left(\frac{r}{\eta}\right)^3\exp\left(\frac{5r}{3\eta}\right)M,
\end{equation*}
one immediately sees that the Baker-Zhao bound is stronger when $r$ grows, and also has the advantage of describing the moduli for which it holds (apart from the possibility of being a multiple of the exceptional modulus if it exists). On the other hand, as per the second remark following Proposition \ref{exception}, the result of the present work holds for $X^{5/12-\eta}\left(1-c/\log\log X\right)$ moduli up to $X^{5/12-\eta}$, while by Dickman's Theorem (see, for instance, \cite{MV2007}*{Theorem 7.2}), there are $o(X^{5/12-\eta})$ integers with no prime divisors exceeding $\exp\left(\frac{\log X}{B\log\log X}\right)$ for which the Baker-Zhao result holds. Hence the present result is valid for a much larger class of arithmetic progressions. With these considerations the two can be regarded as complementary results concerning uniform small gaps between primes in arithmetic progressions over a range of moduli.
\vspace{1em}

\noindent {\bf Acknowledgements. } The author would like to thank the American Institute of Mathematics for the opportunity to participate in the 2014 November workshop on bounded gaps between primes, from which the present work received much stimulus.

\begin{bibdiv}
\begin{biblist}
\bib{BZ2014}{article}{
    title={Gaps of smallest possible order between primes in an arithmetic progression},
    author={Baker, Roger C.},
    author={Zhao, Liangyi}
    eprint={arXiv:1412.0574},
    date={2014}
}
\bib{D2000}{book}{
    title={Multiplicative Number Theory},
    author={Davenport, Harold},
    publisher={Springer-Verlag},
    address={Berlin},
    date={2000},
    edition={3}
}
\bib{GPY2009}{article}{
    title={Primes in tuples I},
    author={Goldston, Daniel Alan},
    author={Pintz, J\'{a}nos},    
    author={Y\i ld\i r\i m, Cem Yal\c{c}\i n},
    journal={Annals of Math (2)},
    number={2},
    volume={170},
    pages={819--862},
    date={2009}
}
\bib{IK2004}{book}{
    title={Analytic Number Theory},
    author={Iwaniec, Henryk},
    author={Kowalski, Emmanuel},
    publisher={Colloqium Publications (American Mathematical Society)},
    address={Providence, Rhode Island},
    date={2004}
}
\bib{Ma2015}{article}{
    title={Small gaps between primes},
    author={Maynard, James},
    journal={Annals of Math (2)},
    number={1},
    volume={181},
    pages={383--413},
    date={2015}
}
\bib{M1971}{book}{
    title={Topics in Multiplicative Number Theory},
    author={Montgomery, Hugh L.},
    publisher={Springer-Verlag},
    address={Berlin},
    date={1971}
}
\bib{MV2007}{book}{
    title={Multiplicative Number Theory I. Classical Theory},
    author={Montgomery, Hugh L.},
    author={Vaughan, Robert C.},
    publisher={Cambridge University Press},
    date={2007}
}
\bib{Z2014}{article}{
    title={Bounded gaps between primes},
    author={Zhang, Yitang},
    journal={Annals of Math (2)},
    number={3},
    volume={179},
    pages={1121--1174},
    date={2014}
}

\end{biblist}
\end{bibdiv}

\end{document}